\documentclass[11pt]{article}
\usepackage{amsmath,amssymb,amsthm}
\usepackage[margin=1.1in]{geometry}
\usepackage{hyperref}

\newtheorem{lemma}{Lemma}
\newtheorem{proposition}[lemma]{Proposition}
\newtheorem{corollary}[lemma]{Corollary}

\theoremstyle{remark}
\newtheorem{remark}[lemma]{Remark}
\newtheorem{question}[lemma]{Question}

\newcommand{\F}{\mathbb{F}}

\newcommand{\Ze}{Z_{\mathrm{ev}}}
\newcommand{\rhomax}{\rho}
\DeclareMathOperator{\tr}{tr}

\title{Parity families and a kernel-averaged $L$-function\\ for near-Ramanujan signings}
\author{Vaibhav Suvagiya\\
\small Sardar Vallabhbhai National Institute of Technology, Surat
\thanks{Verification suite, drivers, and experimental data:
\url{https://github.com/Vaibhavs25/bilu-linial-parity}}}
\date{July 2026}

\begin{document}
\maketitle

\begin{abstract}
For a signing $\sigma$ of a $d$-regular graph, the spectrum of
$A_\sigma$ depends only on the signs of cycles. We study the affine
$\mathbb F_2$ family of signings making every short even cycle
unbalanced, and show that averaging over it converts the sign problem
of the Bilu-Linial conjecture into a counting problem: a master
identity expresses the family-averaged trace as a parity-weighted sum
over wrap classes confined to the span $W$ of the constraint cycles,
and the family-averaged Ihara $L$-function diagonalizes so that every
prime whose parity escapes $W$ contributes the Ramanujan rate
$\sqrt{d-1}$ automatically. Uniform averaging over all signings, by
contrast, provably cannot certify a spectral radius below the Kesten
profile. We prove matched upper and lower bounds for the confined walk
counts, a doubling injection from below, and from above an
ear-decomposition encoding in which the number of fresh runs of a
non-backtracking walk equals the cycle rank of its support, combined
with a window lemma for bicycle-free graphs and a rank bound via the
Moore bound for irregular graphs. Consequences include
$\varepsilon$-versions of the Bilu-Linial conjecture: every
$d$-regular graph that is subcritical at scale $\log n$, and every
$d$-regular graph bicycle-free at radius
$C\log\log n/\delta$, admits a signing in the parity family with
$\rho(A_\sigma)\le2\sqrt{d-1}(1+C\delta\log(1/\delta))(1+o(1))$. We
further identify the necessary hypotheses exactly ($K_d$-trapping;
tree-burst gadgets), give an exact certificate on the hypercube, and
record a decisive obstruction to two-sided interlacing:
$\mathbb E_\sigma\det(xI-A_\sigma^2)$ is not real-rooted, already for
the quadrilateral, where it equals $(x^2-4x+2)^2+4$.
\end{abstract}

\section{Introduction}

Bilu and Linial \cite{bilulinial06} conjectured that every $d$-regular
graph admits a signing $\sigma\colon E\to\{\pm1\}$ of its adjacency
matrix with $\rho(A_\sigma)\le2\sqrt{d-1}$, the spectral radius of the
infinite $d$-regular tree. Since the new eigenvalues of a $2$-lift are
the eigenvalues of a signed adjacency matrix, the conjecture would
produce infinite towers of $d$-regular Ramanujan graphs of every
degree. Marcus, Spielman and Srivastava \cite{mss15} proved the
one-sided bound $\lambda_{\max}(A_\sigma)\le2\sqrt{d-1}$ via
interlacing families, resolving the bipartite case; Mohanty, O'Donnell
and Paredes \cite{mop20} showed that a uniformly random signing is
near-Ramanujan when every ball of radius $r\gg(\log\log n)^2$ contains
at most one cycle; and Xu and Zhang \cite{xuzhang26} recently proved
the two-sided bound $2\sqrt{3(d-1)}$ for all graphs of maximum degree
$d$. Even the $\varepsilon$-relaxed conjecture remains open in
general, and the obstruction is concentrated on graphs that are
locally rich in short cycles.

This paper develops a method adapted to exactly that regime. The
spectrum of $A_\sigma$ depends only on the switching class of
$\sigma$, i.e.\ on the signs of cycles, and short \emph{even} cycles
enter the even traces, hence the spectral radius, at first
order, while odd cycles enter only in pairs
(Lemma~\ref{lem:parity}). We therefore work with the affine
$\mathbb F_2$ solution family $\mathcal F$ of the system ``every even
cycle of length $\le L$ is unbalanced''. Averaging over $\mathcal F$
has two structural consequences. First, a \emph{master identity}
(Proposition~\ref{prop:master}): the family-averaged trace equals a
parity-weighted sum of walk counts over wrap classes confined to the
span $W$ of the constraint cycles: every walk whose parity touches
a long cycle is annihilated, single constraint cycles enter with a
deterministic $-1$, pairs with $+1$. Second, a \emph{kernel-averaged
$L$-function} (Proposition~\ref{prop:Lident}): Bass's Euler product
diagonalizes under the family average, the primes whose parity escapes
$W$ contributing exactly $\tfrac12\log\det(I-u^2B)$, which is analytic
precisely in the Ramanujan disk $|u|<(d-1)^{-1/2}$. The entire
deviation of the averaged $L$-function from the Ramanujan rate is
carried by parity-confined primes: at the level of this identity the
sign problem becomes a counting problem (Question~\ref{q:confined}),
while the quantitative proofs below run through the trace certificate
of Corollary~\ref{cor:certificate} at $k\asymp\log n$. By contrast, the
uniform average over all signings retains only the totally even
walks, whose number always dominates the tree profile
(Corollary~\ref{cor:certificate}): uniform averaging can never certify
a sub-Kesten signing, while the family average can and, in
computation, does.

Our main theorems bound the confined counts and convert them into
signings. From below, a doubling injection shows the totally even
non-backtracking walks grow at rate exactly $\sqrt{d-1}$
(Proposition~\ref{prop:slice}); from above we prove a matching bound
whose engine is the observation that in a non-backtracking walk the
maximal runs of fresh edges are precisely the ears of an ear
decomposition of the support, so their number equals its cycle rank
(Proposition~\ref{prop:upper}). Two further ingredients extend the
count from bounded rank to bounded cycle density: a \emph{window
lemma}: in a graph whose $R$-balls are at most unicyclic, a
non-backtracking path is determined per $R$-window by endpoints,
direction and winding, so paths grow at rate $\exp(O(\log(sR)/R))$
(Lemma~\ref{lem:window}), and a \emph{rank lemma} via the Moore
bound for irregular graphs \cite{ahl02}: bicycle-free graphs with $s$
edges have cycle rank at most $1+Cs\log s/R$
(Lemma~\ref{lem:rank}), the $\log s$ being necessary because
high-girth cages are bicycle-free with rank density approaching one.
The resulting $\varepsilon$-theorems
(Propositions~\ref{thm:eps} and~\ref{thm:bf}) produce, for every
$d$-regular graph that is subcritical at scale $\log n$ or
bicycle-free at radius $C\log\log n/\delta$, a signing in the parity
family with
$\rho(A_\sigma)\le2\sqrt{d-1}\,(1+C\delta\log(1/\delta))(1+o(1))$; 
the hypothesis scale of \cite{mop20}, reached by an independent route,
with the conclusion holding inside the constrained family (including
$W=\{0\}$, which recovers a near-Ramanujan statement for random
signings).

The hypotheses are shown to be necessary in a strong sense: a $K_d$
subgraph traps totally even walks at rate $d-2>\sqrt{d-1}$
(Proposition~\ref{prop:trap}), and tree-burst gadgets realize the
window lemma's rate, so both the core exclusion and the
$\log\log n$ radius are sharp in form. Along the way we give an exact
certificate on the hypercube: every solution of the quadrilateral
system satisfies $A_\sigma^2=nI$, recovering Huang's signing
\cite{huang19} as a special case of the family
(Proposition~\ref{prop:cube}), and we record why the natural
two-sided extension of the interlacing method fails at the first
gate: $\mathbb E_\sigma\det(xI-A_\sigma^2)$ is not real-rooted, with
the exact identity $q_{C_4}=(x^2-4x+2)^2+4$
(Remark~\ref{rem:qG}). All identities and bounds in this paper were
additionally verified by exact computation; the experiments, which
guided the constructions and falsified two intermediate conjectures,
are summarized in Section~\ref{sec:experiments}. Exact spectral
results for signed circulants arising from the same family appear in
a companion note \cite{companion}.

\section{Setup}

$G=(V,E)$ is a simple connected graph, $|V|=n$, $|E|=m$, adjacency matrix
$A$. A signing is $\sigma\in\{\pm1\}^E$, encoded by the bit vector
$s\in\F_2^E$ with $\sigma_e=(-1)^{s_e}$; the signed adjacency matrix
$A_\sigma$ has entries $\sigma_{uv}A_{uv}$. We write
$\rhomax(A_\sigma)=\max_i|\lambda_i(A_\sigma)|$. The cycle space
$Z(G)\le\F_2^E$ has dimension $m-n+1$; the \emph{sign} of $z\in Z(G)$
under $\sigma$ is $(-1)^{s\cdot z}$, and a cycle $C$ is \emph{unbalanced}
if $s\cdot\chi_C=1$. Switching preserves the spectrum and acts trivially
on cycle signs; conversely the switching class of $\sigma$ is exactly the
linear functional $z\mapsto s\cdot z$ on $Z(G)$, so there are
$2^{m-n+1}$ switching classes.

For $d$-regular $G$ the conjecture of Bilu and Linial
\cite{bilulinial06} asserts $\min_\sigma\rhomax(A_\sigma)\le
2\sqrt{d-1}$, the spectral radius of the infinite $d$-regular tree.
Marcus, Spielman and Srivastava \cite{mss15} proved the one-sided bound
$\lambda_{\max}(A_\sigma)\le2\sqrt{d-1}$ for some $\sigma$, resolving the
bipartite case; Mohanty, O'Donnell and Paredes \cite{mop20} showed a
random signing attains $\rhomax\le2\sqrt{d-1}+\varepsilon$ when every
ball of radius $r\gg(\log\log n)^2$ contains at most one cycle; and Xu
and Zhang \cite{xuzhang26} recently proved
$\min_\sigma\rhomax(A_\sigma)\le2\sqrt{3(d-1)}$ for all graphs of maximum
degree $d$ via interlacing families of mixed characteristic polynomials.

\section{Even-trace parity}

For a closed walk $W$ of length $\ell$ let $m_e(W)$ be the number of
traversals of $e$, and let $z(W)\in\F_2^E$ be the parity vector
$z(W)_e=m_e(W)\bmod 2$. Since $W$ is closed, every vertex has even total
incidence with the edge multiset of $W$, so the support of $z(W)$ has all
even degrees: $z(W)\in Z(G)$. Moreover
$|z(W)|\equiv\sum_e m_e(W)=\ell\pmod 2$, and the sign of $W$ under
$\sigma$ is $\prod_e\sigma_e^{m_e(W)}=(-1)^{s\cdot z(W)}$.

Let $\Ze=\{z\in Z(G):|z|\text{ even}\}$, the kernel of the linear map
$z\mapsto|z|\bmod2$ on $Z(G)$. If $G$ is bipartite then $\Ze=Z(G)$;
otherwise $\Ze$ has index $2$, hence dimension $m-n$.

\begin{lemma}[Even-trace parity]\label{lem:parity}
For every signing $\sigma$ and every even $\ell$,
\[
\tr\!\big(A_\sigma^{\ell}\big)
=\sum_{z\in \Ze} N_\ell(z)\,(-1)^{s\cdot z},
\]
where $N_\ell(z)\ge0$ is the number of closed $\ell$-walks in $G$ with
parity vector $z$, independent of $\sigma$.
\end{lemma}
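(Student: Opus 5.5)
We expand the trace as a sum over closed walks and group the walks by their parity vector. Write $\tr(A_\sigma^\ell)=\sum_{W}\prod_{i}(A_\sigma)_{v_{i-1}v_i}$, where $W=(v_0,v_1,\dots,v_\ell=v_0)$ ranges over all closed walks of length $\ell$ in $G$. Nonzero terms come exactly from walks along edges of $G$. Each such term equals $\prod_e\sigma_e^{m_e(W)}$, because every traversal of $e$ contributes one factor $\sigma_e$ regardless of direction, $A_\sigma$ being symmetric. As noted just before the statement, this product is $(-1)^{s\cdot z(W)}$: since $\sigma_e=(-1)^{s_e}$, only $m_e(W)\bmod 2$ matters.

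We then partition the set of closed $\ell$-walks according to $z(W)$. Setting $N_\ell(z)=\#\{W:\ z(W)=z\}$ gives
\[
\tr(A_\sigma^\ell)=\sum_{z\in\F_2^E}N_\ell(z)(-1)^{s\cdot z}.
\]
Clearly $N_\ell(z)\ge 0$. It is also independent of $\sigma$, since it counts walks in the unsigned graph $G$.

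It remains to restrict the index set to $\Ze$, by showing $N_\ell(z)=0$ unless $z\in\Ze$. The preceding paragraph already gives $z(W)\in Z(G)$: each visit of a closed walk to a vertex uses two edge-incidences, so every vertex has even degree in the parity multigraph. It also gives $|z(W)|\equiv\sum_e m_e(W)=\ell\pmod 2$. For even $\ell$ this makes $|z(W)|$ even, so $z(W)\in\Ze$, and the sum reduces to $z\in\Ze$.

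No step is a real obstacle. The only point needing care is the convention for loops and for the direction of traversal. $G$ is simple, so there are no loops, and a traversal of $uv$ in either direction picks up the same factor $\sigma_{uv}$. This is what makes the sign depend only on the unordered multiplicities $m_e(W)$.
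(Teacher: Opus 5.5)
Your proposal is correct and follows the paper's own proof: expand the trace over closed walks, group them by parity vector $z(W)$ using $\mathrm{sign}(W)=(-1)^{s\cdot z(W)}$, and then use $z(W)\in Z(G)$ together with the congruence $|z(W)|\equiv\ell\pmod 2$ to restrict the sum to $\Ze$ for even $\ell$. You spell out the vertex-incidence and loop/direction details that the paper leaves implicit, but the argument is the same.
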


\begin{proof}
Group the closed $\ell$-walks by $z(W)$ and use
$\mathrm{sign}(W)=(-1)^{s\cdot z(W)}$. For even $\ell$ only classes with
$|z|$ even occur, by the congruence above.
\end{proof}

\begin{corollary}\label{cor:evenchar}
$\rhomax(A_\sigma)$ is a function of the restriction of the character
$z\mapsto(-1)^{s\cdot z}$ to $\Ze$. In particular
$\rhomax(A_{-\sigma})=\rhomax(A_\sigma)$, and for non-bipartite $G$ the
map from switching classes to $\rhomax$-relevant data is $2$-to-$1$: at
most $2^{m-n}$ values of $\rhomax$ occur among the $2^{m-n+1}$ classes.
\end{corollary}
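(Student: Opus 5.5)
The corollary follows from Lemma~\ref{lem:parity} once we recover $\rhomax$ from even traces. Since $A_\sigma$ is real symmetric, its eigenvalues $\lambda_1,\dots,\lambda_n$ are real. For even $\ell$ we have $\tr(A_\sigma^\ell)=\sum_i\lambda_i^\ell=\sum_i|\lambda_i|^\ell$, so
\[
\rhomax(A_\sigma)=\lim_{k\to\infty}\tr\!\big(A_\sigma^{2k}\big)^{1/(2k)}.
\]
Lemma~\ref{lem:parity} writes each $\tr(A_\sigma^{2k})$ as $\sum_{z\in\Ze}N_{2k}(z)\chi_s(z)$, where $\chi_s(z)=(-1)^{s\cdot z}$ and the $N_{2k}(z)$ do not depend on $\sigma$. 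So every even trace, and hence $\rhomax$, is a function of $\chi_s|_{\Ze}$ alone.

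If one prefers to avoid the limit, the even traces $p_{2k}$ for $k=1,\dots,n$ determine the multiset $\{\lambda_i^2\}$ by Newton's identities. They therefore determine the spectrum of $A_\sigma^2$, and $\rhomax$ is the square root of its largest eigenvalue.

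\textbf{The sign flip.} Replacing $\sigma$ by $-\sigma$ sends $s$ to $s+\mathbf 1$. Then
\[
(-1)^{(s+\mathbf 1)\cdot z}=(-1)^{s\cdot z}(-1)^{|z|},
\]
which equals $(-1)^{s\cdot z}$ for $z\in\Ze$. So the restricted characters agree, and $\rhomax(A_{-\sigma})=\rhomax(A_\sigma)$. Alternatively, $A_{-\sigma}=-A_\sigma$ has the negated spectrum directly.

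\textbf{The $2$-to-$1$ count.} Switching classes correspond bijectively to linear functionals on $Z(G)$, that is, to elements of $Z(G)^*$, and there are $2^{m-n+1}$ of them. The relevant data is the image under the restriction map $Z(G)^*\to\Ze^*$. This map is surjective because linear functionals extend from a subspace over a field. For non-bipartite $G$, $\Ze$ has codimension $1$, as recorded before Lemma~\ref{lem:parity}. Hence the kernel of the restriction map has order $2$, so the map is exactly $2$-to-$1$ onto $2^{m-n}$ restricted characters.

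The two functionals in a fiber differ by the functional $z\mapsto|z|\bmod 2$, which is $s\cdot z$ with $s=\mathbf 1$. So the fibers are exactly the pairs $\{\sigma,-\sigma\}$, matching the previous paragraph. Since $\rhomax$ factors through this image, at most $2^{m-n}$ values occur.

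The only point needing care is the first step: $\rhomax$ must be recovered from even traces alone, not from the full spectrum. The limit formula handles this cleanly because even powers make every term $|\lambda_i|^{2k}$ nonnegative, so no cancellation can occur.
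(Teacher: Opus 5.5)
Your proof is correct and follows the paper's route: you recover $\rhomax$ as $\lim_k\tr(A_\sigma^{2k})^{1/2k}$, apply Lemma~\ref{lem:parity}, and observe that negating $\sigma$ adds $|z|\bmod 2$ to $s\cdot z$, which vanishes on $\Ze$. Your restriction-map argument, $Z(G)^*\to\Ze^*$ with kernel of order $2$ and fibers $\{\sigma,-\sigma\}$, spells out the $2$-to-$1$ count that the paper leaves implicit.
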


\begin{proof}
$\rhomax(A_\sigma)=\lim_{k\to\infty}\tr(A_\sigma^{2k})^{1/2k}$ since
$\rhomax^{2k}\le\tr(A_\sigma^{2k})\le n\rhomax^{2k}$, and by
Lemma~\ref{lem:parity} every even trace depends on $s$ only through
$s|_{\Ze}$. Negating $\sigma$ adds $|z|\bmod 2$ to $s\cdot z$, which
vanishes on $\Ze$.
\end{proof}

\begin{corollary}[Interference accounting]\label{cor:interference}
Let $C$ be a cycle of $G$.
If $|C|$ is even, the class $z=\chi_C$ contributes to even traces with
the sign $(-1)^{s\cdot\chi_C}$ of $C$ itself (first order). If $|C|$ is
odd, then $\chi_C\notin\Ze$ and $C$ enters even traces only through
combined classes; the lowest-order such classes are
$z=\chi_{C_1}+\chi_{C_2}$ for pairs of odd cycles, contributing with sign
equal to the \emph{product} of the two cycle signs.
\end{corollary}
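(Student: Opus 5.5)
The argument is a direct reading of Lemma~\ref{lem:parity}: I will examine which classes $z\in\Ze$ can involve a given cycle $C$, and with what sign.

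\emph{Even cycles.} Suppose $|C|$ is even. Then $\chi_C\in Z(G)$ has even weight, so $\chi_C\in\Ze$. In the expansion of Lemma~\ref{lem:parity}, the class $z=\chi_C$ therefore appears with coefficient $N_\ell(\chi_C)(-1)^{s\cdot\chi_C}$, and $(-1)^{s\cdot\chi_C}$ is by definition the sign of $C$. For $\ell\ge|C|$ this coefficient is nonzero, since traversing $C$ once from any of its vertices in either direction, with backtracking spurs if needed, gives a closed walk with parity vector $\chi_C$. The contribution thus depends linearly on the single sign of $C$, which is what first order means here.

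\emph{Odd cycles.} Suppose $|C|$ is odd. Then $|\chi_C|$ is odd, so $\chi_C\notin\Ze$, and $C$ never appears in an even trace on its own. Any class through which $C$ influences $\tr(A_\sigma^{\ell})$ must be an even-weight cycle-space vector. I will use the following parity count. Write $z=\sum_i\chi_{C_i}$ as a sum of cycles. Weight parity is additive modulo $2$ because $|x+y|\equiv|x|+|y|\pmod 2$. Hence $z\in\Ze$ if and only if the decomposition contains an even number of odd cycles. So the simplest classes involving $C$ together with other odd cycles are $z=\chi_{C_1}+\chi_{C_2}$ with $C_1,C_2$ odd. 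For these, $|z|=|C_1|+|C_2|-2|C_1\cap C_2|$ is even, so $z\in\Ze$. The sign is $(-1)^{s\cdot z}=(-1)^{s\cdot\chi_{C_1}}(-1)^{s\cdot\chi_{C_2}}$, the product of the two cycle signs, by linearity of $s\cdot(\,\cdot\,)$ over $\F_2$.

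\emph{Main obstacle.} The only point needing care is what ``lowest-order'' means. I would interpret it as the number of odd cycles in the decomposition; the count above shows this number is even, hence at least $2$, and the pair classes achieve $2$. I would also note that a single walk can realize such a class: walk around $C_1$, travel along a path to $C_2$, walk around $C_2$, and return along the same path. The connecting path is traversed twice and so cancels in $z$. This shows $N_\ell(\chi_{C_1}+\chi_{C_2})>0$ for $\ell$ large and even, so the contribution is genuinely present.
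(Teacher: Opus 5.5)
Your proposal is correct and takes essentially the same route as the paper, which gives no separate proof and treats the corollary as a direct reading of Lemma~\ref{lem:parity}, combined with $|x+y|\equiv|x|+|y|\pmod 2$ and linearity of $z\mapsto s\cdot z$, exactly as you do. Your explicit walks showing $N_\ell(\chi_C)>0$ and $N_\ell(\chi_{C_1}+\chi_{C_2})>0$ for suitable even $\ell$ are a small addition the paper leaves unstated; they confirm that these contributions actually occur.
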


Corollary~\ref{cor:interference} identifies the sign structure but not
the magnitudes $N_\ell(z)$, so it does not by itself prove monotonicity
statements. It does, however, explain two robust empirical phenomena
(\S\ref{sec:experiments}): making all short \emph{even} cycles unbalanced
places a deterministic $(-1)$ on their first-order terms, while making
all short \emph{odd} cycles unbalanced makes every odd-pair term
$(+1)(+1)$ or $(-1)(-1)$, i.e.\ sign-definite \emph{positive},
eliminating cancellation; and indeed exhaustive optima place roughly half
of all triangles unbalanced, while forcing all triangles unbalanced
raises $\rhomax$ above the random-signing mean.

\section{An $\F_2$ obstruction and the parameter $\beta_L$}

Fix $L$ and consider the linear system over $\F_2$
\begin{equation}\label{eq:sys}
s\cdot\chi_C=1\qquad\text{for every even cycle $C$ of length}\le L .
\end{equation}

\begin{lemma}[$K_4$ obstruction]\label{lem:k4}
If $G$ contains $K_4$ as a subgraph, then the system \eqref{eq:sys} with
$L\ge4$ is inconsistent: no signing of $G$ makes all quadrilaterals
unbalanced.
\end{lemma}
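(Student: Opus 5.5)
Since the system \eqref{eq:sys} is linear over $\F_2$, the plan is to exhibit an explicit dependency among the quadrilateral constraints inside a copy of $K_4$ that is incompatible with the right-hand sides all equal to $1$. Let the $K_4$ have vertices $a,b,c,d$. It contains exactly three $4$-cycles:
\[
Q_1=abcd,\qquad Q_2=abdc,\qquad Q_3=acbd .
\]
Each of these has length $4\le L$, so each appears in \eqref{eq:sys}.

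The key step is to compute $\chi_{Q_1}+\chi_{Q_2}+\chi_{Q_3}$ in $\F_2^E$. The six edges of the $K_4$ form three perfect matchings: $\{ab,cd\}$, $\{ac,bd\}$ and $\{ad,bc\}$. Each $4$-cycle is the complement of exactly one matching, so each edge of $K_4$ lies in exactly two of the three quadrilaterals. Therefore
\[
\chi_{Q_1}+\chi_{Q_2}+\chi_{Q_3}=0 .
\]
Dotting this with any $s\in\F_2^E$ gives
\[
s\cdot\chi_{Q_1}+s\cdot\chi_{Q_2}+s\cdot\chi_{Q_3}=0 .
\]
The three constraints, however, would force the left-hand side to equal $1+1+1=1$ in $\F_2$. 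This contradiction shows that \eqref{eq:sys} is inconsistent. In the language of signs, the product of the three quadrilateral signs is always $+1$, so the three quadrilaterals cannot all be negative.

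The only thing to verify carefully is the edge-multiplicity count, and it is immediate from the matching decomposition. There is no real obstacle here.

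It also costs nothing to observe that the argument only uses $L\ge4$ and the fact that a $K_4$ subgraph is present. The graph need not be simple beyond that, and no other even cycles need to be considered, because one inconsistent subsystem already makes the whole system inconsistent.
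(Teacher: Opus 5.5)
Your proof is correct and follows essentially the same approach as the paper: the three $4$-cycles of the $K_4$ satisfy $\chi_{Q_1}+\chi_{Q_2}+\chi_{Q_3}=0$ because each edge lies in exactly two of them, which is incompatible with the right-hand side $1+1+1=1$. Your matching-complement description is a convenient way to verify the edge count, and the paper uses the same count.
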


\begin{proof}
Let $\{a,b,c,d\}$ span a $K_4$ and let $C_1,C_2,C_3$ be its three
$4$-cycles. Every edge of the $K_4$ lies in exactly two of them, so
$\chi_{C_1}+\chi_{C_2}+\chi_{C_3}=0$ in $\F_2^E$, whence
$s\cdot\chi_{C_1}+s\cdot\chi_{C_2}+s\cdot\chi_{C_3}=0$ for every $s$,
while \eqref{eq:sys} demands the sum be $1+1+1=1$.
\end{proof}

More generally, any odd-cardinality dependent family of even short
cycles blocks \eqref{eq:sys}. Define
\[
\beta_L(G)\;=\;\frac{1}{\#\{\text{even cycles}\le L\}}\;
\max_{s\in\F_2^E}\#\{\text{even cycles }C,\ |C|\le L:\ s\cdot\chi_C=1\},
\]
the maximum simultaneously-unbalanceable fraction. Every explicit signing certifies a lower bound on $\beta_L$, and
$\beta_L\ge\tfrac12$ always, by averaging over uniform signings; row
reduction produces the affine solution family of a consistent
subsystem, whose size is a lower bound depending on the elimination
order. Computing $\beta_L$ exactly is MAX-XOR-SAT, NP-hard in general
\cite{hastad01}. Our data
(\S\ref{sec:experiments}) suggest $\beta_L$ as the parameter governing
the defect $\min_\sigma\rhomax(A_\sigma)-2\sqrt{d-1}$ on cycle-dense
instances.

\section{The hypercube: an exact certificate}

\begin{proposition}\label{prop:cube}
On $Q_n$ the system \eqref{eq:sys} with $L=4$ is consistent, and
\emph{every} solution $\sigma$ satisfies $A_\sigma^2=nI$; consequently
$\rhomax(A_\sigma)=\sqrt n$ for the entire affine solution family.
\end{proposition}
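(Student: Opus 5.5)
We prove consistency by exhibiting an explicit solution, and then show that the identity $A_\sigma^2=nI$ is forced by the constraints alone.

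\emph{Consistency.} Write vertices of $Q_n$ as $x\in\F_2^n$, and let the edge $\{x,x+e_i\}$ have direction $i$. Use Huang's signing: $s_{\{x,x+e_i\}}=\sum_{j<i}x_j \bmod 2$. This is well defined because $x$ and $x+e_i$ agree in all coordinates $j<i$.

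Every $4$-cycle of $Q_n$ is a square $x,\,x+e_i,\,x+e_i+e_j,\,x+e_j$ with $i<j$. Its two direction-$i$ edges are based at $x$ and $x+e_j$; since $j>i$, coordinate $j$ does not enter $\sum_{k<i}$, so these two edges carry equal bits and cancel. Its two direction-$j$ edges are based at $x$ and $x+e_i$; these differ exactly in coordinate $i<j$, so they carry different bits and contribute $1$. The total is $s\cdot\chi_C=1$, so every quadrilateral is unbalanced. (Girth is $4$, so $L=4$ involves only these cycles.)

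\emph{Every solution gives $A_\sigma^2=nI$.} Compute $(A_\sigma^2)_{uv}=\sum_w \sigma_{uw}\sigma_{wv}$ directly, without using the explicit solution.

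On the diagonal, $(A_\sigma^2)_{uu}=\sum_{w\sim u}\sigma_{uw}^2=\deg u=n$.

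Off the diagonal, the entry vanishes unless $u$ and $v$ are at Hamming distance $2$. In that case $u,v$ have exactly two common neighbours $w_1,w_2$, and $u,w_1,v,w_2$ is a $4$-cycle $C$. Then
\[
(A_\sigma^2)_{uv}=\sigma_{uw_1}\sigma_{w_1v}+\sigma_{uw_2}\sigma_{w_2v}.
\]
The product of the two terms is the sign $(-1)^{s\cdot\chi_C}=-1$, so the terms are opposite and cancel. Hence $A_\sigma^2=nI$ for every solution $\sigma$.

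\emph{Spectral radius.} Since $A_\sigma$ is symmetric with $A_\sigma^2=nI$, every eigenvalue satisfies $\lambda^2=n$. Therefore $\rho(A_\sigma)=\sqrt n$ for the entire affine solution family.

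The only real obstacle is the consistency check, and specifically verifying which pair of parallel edges in a square carries differing bits.
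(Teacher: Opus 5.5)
Your proof is correct and follows the paper's argument. The diagonal entries of $A_\sigma^2$ equal the degree $n$. Adjacent pairs and pairs at distance $\ge3$ have no common neighbours; for adjacent pairs this rests on triangle-freeness, which you use implicitly. At distance $2$ the two terms cancel because their product is the sign of the $4$-cycle through both common neighbours.

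The only difference is in the consistency step. You verify directly that the signing $s_{\{x,x+e_i\}}=\sum_{j<i}x_j$ (Huang's signing with the coordinates relabelled) unbalances every square, whereas the paper simply cites Huang; your version makes the consistency claim self-contained.
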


\begin{proof}
Consistency is witnessed by Huang's signing \cite{huang19}. Let $\sigma$
be any solution. $Q_n$ is triangle-free, so
$(A_\sigma^2)_{uv}=0$ for adjacent $u,v$. If $u\ne v$ are at distance
$2$ they have exactly two common neighbours $w,w'$, and
$u\,w\,v\,w'$ is a $4$-cycle; unbalancedness gives
$\sigma_{uw}\sigma_{wv}=-\sigma_{uw'}\sigma_{w'v}$, so the two terms of
$(A_\sigma^2)_{uv}$ cancel. Pairs at distance $\ge3$ contribute nothing,
and the diagonal equals the degree $n$.
\end{proof}

This proves the zero variance of $\rhomax$ observed across sampled solutions on $Q_5,Q_6$ (all at $\sqrt5,\sqrt6$ exactly, against a Kesten
floor of $4$ and $2\sqrt5$): on $Q_n$ the local $\F_2$ condition is an
exact spectral certificate, far below $2\sqrt{d-1}$. It also isolates
what is special: the certificate needs \emph{every} $2$-path to be
completed by exactly one $4$-cycle. Graphs with this property are
covered by the two-eigenvalue framework of signed covers; the general
question is how much of the cancellation survives when only a
$\beta_L$-fraction of quadrilaterals can be unbalanced.

\section{Failure of two-sided interlacing for $A_\sigma^2$}

By Godsil-Gutman \cite{godsilgutman81},
$\mathbb{E}_\sigma\det(xI-A_\sigma)=\mu_G(x)$, the matching polynomial,
which is real-rooted with roots in $[-2\sqrt{d-1},2\sqrt{d-1}]$
\cite{heilmannlieb72}; this underlies \cite{mss15}. A natural two-sided
analogue would take
$q_G(x):=\mathbb{E}_\sigma\det\!\big(xI-A_\sigma^2\big)$ and hope for
real-rootedness plus an interlacing family, which would give some
$\sigma$ with $\rhomax(A_\sigma)^2\le\operatorname{maxroot}(q_G)$.

\begin{remark}\label{rem:qG}
$q_G$ is not real-rooted in general. For $G=C_4$ the two switching
classes have $A_\sigma^2$-spectra $\{4,4,0,0\}$ and $\{2,2,2,2\}$, so
\[
q_{C_4}(x)=\tfrac12\big[x^2(x-4)^2+(x-2)^4\big]
=x^4-8x^3+20x^2-16x+8=(x^2-4x+2)^2+4,
\]
which has no real roots at all. Exhaustive computation over all
connected regular graphs on at most $7$ vertices, plus Petersen, $Q_3$
and $K_{3,3}$, shows real-rootedness fails for every instance except the odd cycles
(where $A_\sigma^2$ is signing-independent) and, curiously, $K_4$:
$q_{K_4}=x^4-12x^3+42x^2-52x+21$ is real-rooted despite its three
quadrilaterals, so containing an even cycle is not the criterion, and
characterizing the graphs with real-rooted $q_G$ is open. In all
computed cases every complex root satisfied
$\operatorname{Re}\le4(d-1)$.
\end{remark}

The even-subgraph corrections that cancel in
$\mathbb{E}\det(xI-A_\sigma)$ survive squaring; this is the concrete
obstruction to extending the interlacing method two-sidedly over the
plain signing family, and delimits where any improvement of
\cite{xuzhang26} must diverge from it.

\section{Computational evidence}\label{sec:experiments}

We summarize the computations (code in \texttt{signed\_spectra.py};
switching classes enumerated exactly as co-tree sign patterns).

\emph{Defect atlas.} For every connected regular graph on at most $7$
vertices and for Petersen, $Q_3$, $Q_4$, Heawood, M\"obius-Kantor and
circulants $C_n(1,2)$, the exact minimum of $\rhomax$ over all switching
classes is at most $2\sqrt{d-1}$, usually strictly below
($K_6$: $\sqrt5\approx2.236$ against $3.464$, attained by a conference
signing; octahedron: $2.0$). Among the non-cycle instances, $75$-$97\%$ of switching classes lie
strictly below the floor; for the $2$-regular cycles the fraction is
$0$ or $\tfrac12$, the floor being attained rather than beaten. The exhaustive optimum on $Q_4$ is
Huang's signing. Across all optima, quadrilaterals are unbalanced at
rate $\approx1$, triangles at rate $\approx\tfrac12$, consistent with
Corollary~\ref{cor:interference}.

\emph{The even-cycle rule at scale.} Solving \eqref{eq:sys} and sampling
the affine family: on $Q_5,Q_6$ all constraints are satisfiable and
$\rhomax=\sqrt n$ exactly (Proposition~\ref{prop:cube}); on $C_n(1,2)$ with $n$ even the
quadrilateral system is fully satisfiable and the family sits at
$\rhomax\le2\sqrt2\approx2.83$ against a floor of $3.46$, an exact evaluation deferred to the companion note
\cite{companion}, and undercutting fairly seeded local search
($2.790$ against $2.856$ for best-of-$60$ random draws plus local
search, on $C_{30}(1,2)$) at negligible cost. Forcing triangles
unbalanced instead \emph{raises} $\rhomax$ to $3.95$, above the
random-signing mean $3.44$. On a locally tree-like control (random
cubic, $n=60$) all methods land within $0.2$ of the Kesten floor and
none separates. On $K_{12}$ the system is inconsistent outright (Lemma~\ref{lem:k4});
the maximal simultaneously-unbalanceable fraction satisfies
$\beta_4\ge\tfrac12$ for every graph (average over uniform signings)
and $\beta_4\ge0.545$ here by local search, and the rule's collapse on
dense cliques places them in Seidel-matrix territory.

\emph{Variance onset.} On symmetry-free instances (random $2$-lift
towers of $Q_3$ and $K_4$, planted-quadrilateral random $4$-regular
graphs) we sampled the solution family of \eqref{eq:sys} alongside
uniformly random signings. The deterministic particular solution
produced by row reduction is a strong upward outlier within the family
($12$-$34$ standard deviations above the sampled mean on towers,
drifting to floor $+0.15$ at $n=1024$) and must be excluded from
dispersion estimates; an earlier draft of this work reported
family-to-random dispersion ratios inflated by this outlier. With it
excluded, family and random dispersions are comparable on towers
(ratios $0.79$-$1.21$ across $n\in\{256,512,1024\}$ and all seeds
tested): on these dilute instances the constraint system pins a
vanishing fraction of the cycle space ($8$ constraints against $1537$
dimensions at $n=1024$), and conditioning on the short-even signature
leaves essentially the fluctuation of a uniform signing. The family
minimum over $200$ sampled solutions still tracks the
Alon-Boppana-forced profile (floor $-0.02$ at $n=1024$). The family's
advantage over uniform signings is a cycle-density phenomenon: on
planted-quadrilateral instances the family minimum beats the random
minimum by $0.09$-$0.23$ at equal sample sizes across every size and
seed tested, and on circulants random signings concentrate above the
Kesten floor from $n\approx480$ while the family sits at $2\sqrt2$ and
below. The exact flatness on
$Q_n$ is explained by Proposition~\ref{prop:cube}.

\emph{Matched-budget baselines, ablation, and generator control.} At
an equal evaluation budget ($2000$ spectral-radius evaluations per
strategy per instance, $20$ instances per set, paired $95\%$
$t$-intervals, deterministic seeds; \texttt{campaign.py}): on
planted-quadrilateral $4$-regular graphs ($n=64$) the family's mean gap
to the floor is $-0.351\pm0.008$, beating uniform random signings by
$+0.094\pm0.011$, simulated annealing by $+0.053\pm0.010$, and,
decisively for attribution, an equal number of uniformly
\emph{random} affine $\mathbb F_2$ constraints of the same codimension
by $+0.098\pm0.010$: the effect is parity, not conditioning. (An
earlier wall-clock budgeting favoured the search heuristics through the
family's per-draw solve overhead; the evaluation budget removes both
that bias and the hardware dependence.) A conditioned-uniform control
, symmetric double-swap MCMC restricted to the same
quadrilateral-count band, reproduces the planted results
($-0.340\pm0.011$, paired deltas agreeing within intervals), excluding
generator bias. Tabu search is the one competitive baseline,
statistically indistinguishable from the family on the generic sets
($-0.003\pm0.013$, $+0.000\pm0.018$): notable, since the family reaches
parity with a tuned search while performing no search at all, and
family-seeded local search beats fairly seeded random local search at
equal draws, as measured above. On circulants the family dominates
every arm outright ($+0.17$ to $+0.63$), consistent with its containing
the conjecturally optimal class of the companion note.

\section{The kernel-averaged trace}\label{sec:kernel}

Fix a consistent system \eqref{eq:sys}, let
$W=\operatorname{span}\{\chi_C\}\subseteq\Ze$ be the span of its
constraint cycles, and let $\mathcal F$ be its family of switching
classes.

\begin{proposition}[Master identity]\label{prop:master}
The map $\pi\colon W\to\F_2$,
$\pi\big(\sum_{C\in S}\chi_C\big)=|S|\bmod2$, is a well-defined linear
form, and for every even $\ell$,
\[
\mathbb E_{\sigma\in\mathcal F}\,\tr\!\big(A_\sigma^{\ell}\big)
\;=\;\sum_{z\in W}(-1)^{\pi(z)}\,N_\ell(z).
\]
\end{proposition}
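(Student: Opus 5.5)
The plan is to reduce the averaged trace to character orthogonality on the affine family. There are three steps: show $\pi$ is well defined, describe $\mathcal F$ as a coset of characters, and average the even-trace expansion of Lemma~\ref{lem:parity} term by term.

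\emph{Well-definedness of $\pi$.} Since the system \eqref{eq:sys} is consistent, there is some $s_0\in\F_2^E$ with $s_0\cdot\chi_C=1$ for every constraint cycle $C$. For $z=\sum_{C\in S}\chi_C\in W$, linearity then gives $s_0\cdot z=|S|\bmod 2$. The left-hand side depends only on $z$ and not on the representation $S$, so $\pi(z)=s_0\cdot z$ is well defined and linear. This is exactly the contrapositive of the odd-dependent-family obstruction in Lemma~\ref{lem:k4}. Moreover, every solution $s$ satisfies $s|_W=\pi$.

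\emph{Structure of the family.} Switching classes correspond to linear functionals $\phi\colon Z(G)\to\F_2$, since $\phi(z)=s\cdot z$ and every functional on $Z(G)$ arises this way. The family $\mathcal F$ is therefore the set of functionals with $\phi|_W=\pi$, which is a coset of the annihilator $W^\perp=\{\psi\in Z(G)^*:\psi|_W=0\}$. The expectation is taken as the uniform average over this coset; uniform averaging over solutions $s$ gives the same result, because each class has the same number $2^{n-1}$ of representatives.

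\emph{Averaging.} Apply Lemma~\ref{lem:parity} and exchange the expectation with the finite sum:
\[
\mathbb E_{\sigma\in\mathcal F}\tr(A_\sigma^\ell)=\sum_{z\in\Ze}N_\ell(z)\,\mathbb E_{\phi}(-1)^{\phi(z)} .
\]
\begin{itemize}
\item If $z\in W$, then $\phi(z)=\pi(z)$ for every $\phi$ in the family, so the factor is $(-1)^{\pi(z)}$.
\item If $z\notin W$, then $z\notin (W^\perp)^\perp=W$, using finite-dimensional duality within $Z(G)$. Hence some $\psi_0\in W^\perp$ has $\psi_0(z)=1$. The involution $\phi\mapsto\phi+\psi_0$ preserves the coset and flips the sign of $(-1)^{\phi(z)}$, so the average vanishes.
\end{itemize}
Since $W\subseteq\Ze$, the surviving terms are exactly $z\in W$, and the sum over them is the stated identity.

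\emph{Main obstacle.} The only delicate point is bookkeeping over what is being averaged: one must confirm that averaging over the solutions $s$ and averaging over switching classes give the same measure, and that the annihilator argument is carried out inside $Z(G)$, where $W$ lives, rather than inside $\F_2^E$. Both points are routine once $\mathcal F$ is identified as a coset of $W^\perp$.
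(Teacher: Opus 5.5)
Your proof is correct and follows the paper's argument. The paper likewise identifies $\mathcal F$ with the coset $\varphi_0+\operatorname{Ann}(W)$ in $\operatorname{Hom}(Z(G),\F_2)$, notes that $\varphi|_W=\pi$ throughout, uses character orthogonality with $(\operatorname{Ann}W)^\perp=W$ to kill every $z\notin W$, and then averages Lemma~\ref{lem:parity} term by term. There are two cosmetic differences: you obtain well-definedness of $\pi$ directly as $\pi(z)=s_0\cdot z$ for a particular solution $s_0$, whereas the paper notes that consistency forces every dependency among the constraint cycles to have even size; and you explicitly check that averaging over solutions $s$ and over switching classes agree, since each class has $2^{n-1}$ representatives.
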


\begin{proof}
Consistency means every dependency $\sum_{C\in S}\chi_C=0$ has $|S|$
even, so $\pi$ is well defined; linearity follows from
$|S|+|S'|\equiv|S\triangle S'|\pmod2$. Identify switching classes with
$\operatorname{Hom}(Z(G),\F_2)$; then
$\mathcal F=\varphi_0+\operatorname{Ann}(W)$ and $\varphi|_W=\pi$ for
every $\varphi\in\mathcal F$. For $z\in Z(G)$ the average
$\mathbb E_{\psi\in\operatorname{Ann}(W)}(-1)^{\psi(z)}$ equals $1$ if
$z\in(\operatorname{Ann}W)^{\perp}=W$ and $0$ otherwise. Averaging the
expansion of Lemma~\ref{lem:parity} term by term gives the claim; note
$W\subseteq\Ze$ since the constraint cycles are even.
\end{proof}

Walks whose wrap parity touches any cycle outside $W$ are annihilated;
a single constraint cycle enters with $-1$, a pair with $+1$, and so on.

\begin{corollary}\label{cor:certificate}
For every $k$,
\[
\min_{\sigma\in\mathcal F}\rhomax(A_\sigma)\;\le\;
R_{\mathcal F}(k):=\Big(\sum_{z\in W}(-1)^{\pi(z)}N_{2k}(z)\Big)^{1/2k}.
\]
By contrast the average over all signings retains only $z=0$:
$\mathbb E_\sigma\tr(A_\sigma^{2k})=N_{2k}(0)\ge n\,t_{2k}(d)$, where
$t_{2k}(d)$ counts closed walks at the root of the infinite $d$-regular
tree; indeed every such tree walk crosses each tree edge an even number
of times and projects, injectively by unique lifting, to a closed walk
in $G$ with all edge multiplicities even. Hence the uniform average can
never certify a spectral radius below the Kesten profile, while
$R_{\mathcal F}(k)$ can, if and only if the signed sum over
$W\setminus\{0\}$ is negative enough to consume both the even-wrap
excess $N_{2k}(0)-n\,t_{2k}(d)$ and part of the tree term.
\end{corollary}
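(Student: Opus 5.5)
The plan has three parts: the certificate inequality, the uniform-average identity with its tree lower bound, and the closing "if and only if" comparison.

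\emph{The inequality.} Set $\ell=2k$ in Proposition~\ref{prop:master}. The family average $\mathbb E_{\sigma\in\mathcal F}\tr(A_\sigma^{2k})$ equals $S_k:=\sum_{z\in W}(-1)^{\pi(z)}N_{2k}(z)$. Each $\tr(A_\sigma^{2k})=\sum_i\lambda_i^{2k}$ is nonnegative, so $S_k\ge0$ and the $2k$-th root makes sense. An average is at least the minimum, and $\tr(A_\sigma^{2k})\ge\rhomax(A_\sigma)^{2k}$ holds termwise, as in the proof of Corollary~\ref{cor:evenchar}. Hence
\[
\min_{\sigma\in\mathcal F}\rhomax(A_\sigma)^{2k}\le\min_{\sigma\in\mathcal F}\tr(A_\sigma^{2k})\le S_k .
\]
Taking $2k$-th roots gives $\min_{\sigma}\rhomax(A_\sigma)\le R_{\mathcal F}(k)$.

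\emph{The uniform average.} Take the trivial system, so that $W=\{0\}$, $\pi=0$, and $\mathcal F$ is all switching classes. Averaging uniformly over signings $s\in\F_2^E$ pushes forward to the uniform distribution on $\operatorname{Hom}(Z(G),\F_2)$. The character computation in the proof of Proposition~\ref{prop:master} then kills every $z\ne0$, leaving $\mathbb E_\sigma\tr(A_\sigma^{2k})=N_{2k}(0)$.

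For the lower bound, fix a vertex $v$ and a universal covering map $p\colon T_d\to G$ rooted at a lift $\tilde v$; this uses $d$-regularity. Every closed walk of length $2k$ at $\tilde v$ in the tree traverses each tree edge an even number of times, since the tree has no cycles and the walk returns. Its projection is a closed walk at $v$, and the multiplicity of each edge $e$ of $G$ is the sum of the multiplicities of the tree edges over $e$. That sum is even, so the projected walk has parity vector $z=0$. The projection is injective because a walk in $G$ from $v$ has a unique lift starting at $\tilde v$. Summing over the $n$ choices of $v$ gives $N_{2k}(0)\ge n\,t_{2k}(d)$.

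Then $\big(\mathbb E_\sigma\tr A_\sigma^{2k}\big)^{1/2k}\ge n^{1/2k}t_{2k}(d)^{1/2k}$. This lies at or above the Kesten profile $t_{2k}(d)^{1/2k}\to2\sqrt{d-1}$, so no uniform-average certificate can beat it.

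\emph{The equivalence.} Split the certificate sum as
\[
S_k=n\,t_{2k}(d)+\big(N_{2k}(0)-n\,t_{2k}(d)\big)+\sum_{z\in W\setminus\{0\}}(-1)^{\pi(z)}N_{2k}(z),
\]
and define $R_{\mathcal F}(k)$ to certify below the Kesten profile when $S_k<n\,t_{2k}(d)$. This holds exactly when the signed sum over $W\setminus\{0\}$ is less than $-\big(N_{2k}(0)-n\,t_{2k}(d)\big)$. Since that excess is nonnegative by the previous step, this means the signed sum consumes the whole excess and then part of the tree term. This rearrangement is a tautology once the benchmark is fixed.

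The main point of care is that the statement is informal about what "below the Kesten profile" means, so I fix the benchmark $n\,t_{2k}(d)$ at trace level. Beyond that, the only genuinely mathematical step is the unique-lifting injection, which is standard covering theory. Everything else is averaging.
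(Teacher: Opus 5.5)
Your proposal is correct and follows the paper's own justification, which is embedded in the statement itself: Proposition~\ref{prop:master} at $\ell=2k$, combined with $\min\le$ average and $\rhomax(A_\sigma)^{2k}\le\tr(A_\sigma^{2k})$; the character average killing every $z\ne0$ under uniform signings; and the unique-lifting injection of closed tree walks into totally even closed walks of $G$. Your reading of the closing ``if and only if'' as a rearrangement against the trace-level benchmark $n\,t_{2k}(d)$ is also how the paper uses it, since it measures the averages against the tree profile $n\,t_{2k}(d)$.
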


\begin{corollary}[Confined mass]\label{cor:balanced}
The homogeneous family $\operatorname{Ann}(W)$, all constraint cycles
\emph{balanced}, measures the total parity-confined walk mass:
\[
\mathbb E_{\psi\in\operatorname{Ann}(W)}\,\tr\!\big(A_\psi^{2k}\big)
=\sum_{z\in W}N_{2k}(z).
\]
This is the same annihilator computation with $\varphi_0=0$, so every
$z\in W$ enters with sign $+1$; it makes the confined mass, and hence
Question~\ref{q:confined} below, directly measurable by sampling
eigenvalues.
\end{corollary}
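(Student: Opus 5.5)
The statement to prove is Corollary~\ref{cor:balanced}: averaging $\tr(A_\psi^{2k})$ over the homogeneous family $\operatorname{Ann}(W)$ gives the unsigned confined mass $\sum_{z\in W}N_{2k}(z)$. I would rerun the annihilator computation from the proof of Proposition~\ref{prop:master} with the base point $\varphi_0=0$.

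First, identify switching classes with $\operatorname{Hom}(Z(G),\F_2)$, as in that proof. The homogeneous system, with every constraint cycle balanced, has solution set exactly $\operatorname{Ann}(W)$. This system is always consistent, because $0$ is a solution, so no parity condition on $W$ is needed. Next, expand each trace with Lemma~\ref{lem:parity}:
\[
\tr\big(A_\psi^{2k}\big)=\sum_{z\in\Ze}N_{2k}(z)(-1)^{\psi(z)}.
\]
The counts $N_{2k}(z)$ do not depend on $\psi$, so averaging over $\psi$ can be done term by term.

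The core step is the character-orthogonality fact already used in Proposition~\ref{prop:master}. For $z\in Z(G)$, the map $\psi\mapsto(-1)^{\psi(z)}$ is a character of the finite group $\operatorname{Ann}(W)$. Its average is $1$ if the character is trivial and $0$ otherwise. It is trivial exactly when $z\in(\operatorname{Ann}W)^\perp$, and this equals $W$ by finite-dimensional duality over $\F_2$ inside $Z(G)$.

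Substituting gives $\sum_{z\in W\cap\Ze}N_{2k}(z)$. Since the constraint cycles are even, $W\subseteq\Ze$, and the sum is the claimed $\sum_{z\in W}N_{2k}(z)$ with every sign equal to $+1$. This is the same formula as the master identity, with $\pi$ replaced by the zero form.

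The only point needing care is the duality $(\operatorname{Ann}W)^\perp=W$. It must be taken inside $Z(G)$, not $\F_2^E$, because switching classes are functionals on the cycle space. That is routine linear algebra, so I expect no real obstacle. The remark about measurability then follows directly: $\tr(A_\psi^{2k})=\sum_i\lambda_i(A_\psi)^{2k}$, so sampling $\psi$ uniformly from $\operatorname{Ann}(W)$ and computing eigenvalues gives an unbiased estimator of the confined mass.
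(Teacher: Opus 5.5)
Your proposal is correct and is essentially the paper's own argument: rerun the annihilator computation of Proposition~\ref{prop:master} with base point $\varphi_0=0$. The character average then retains exactly the classes $z\in(\operatorname{Ann}W)^{\perp}=W\subseteq\Ze$, each with sign $+1$, and your remark that the duality must be taken inside $Z(G)$ is the right point of care.
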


Numerically the mechanism is already decisive at $k=40$ with $400$
sampled family members ($n=128$). On a random $2$-lift tower of $K_4$
($d=3$, Kesten floor $2\sqrt2\approx2.828$):
$R_{\mathcal F}(40)\approx2.769$ against
$R_{\mathrm{all}}(40)\approx2.846$ and tree rate $2.839$; the
kernel-averaged trace alone certifies a sub-Kesten signing on a
symmetry-free instance, which Corollary~\ref{cor:certificate} shows the
uniform average can never do. On a planted-quadrilateral $4$-regular graph ($37$ of $41$
quadrilateral constraints in a greedy consistent subsystem, floor
$3.464$):
$R_{\mathcal F}(40)\approx3.327$. On $C_{60}(1,2)$ the family average is
dominated by its $2\sqrt2$-class, $R_{\mathcal F}(40)\approx2.872$,
while $R_{\mathrm{all}}$ stalls near $3.68$, above the tree rate, as the
theory demands.

Measuring all three averages against the tree profile $n\,t_{2k}(d)$
separates the confined masses. On random $2$-lift towers the number of
short cycles stays bounded as $n$ grows, so the confined mass dilutes
and the instances drift toward the bicycle-free regime; the demanding
test is constant local density. On planted-quadrilateral $4$-regular
graphs at fixed density $\tfrac12$ quadrilateral per vertex,
$n\in\{128,256,512\}$, $k=30$: the ordering
$R_{\mathcal F}<R_{\mathrm{all}}\approx R_{\mathrm{tree}}<
R_{\mathrm{bal}}$ holds throughout; the confined-rate excess
$R_{\mathrm{bal}}-R_{\mathrm{all}}\approx0.12$-$0.15$ is
$n$-independent; the confined-mass ratio
$\mathbb E_{\mathrm{bal}}/\mathbb E_{\mathrm{all}}$ remains $O(1)$ in
$n$ at fixed $k$; and
$\mathbb E_{\mathcal F}\tr(A^{2k})/(n\,t_{2k})\approx0.08$-$0.11$,
below one with an $n$-stable margin: the parity-corrected average
beats the tree profile itself. These are precisely the behaviours the
counting question below predicts, with rate excess $\delta\approx0.04$
at this density.

\section{A kernel-averaged $L$-function}\label{sec:zeta}

By Bass's theorem \cite{bass92}, for any signing
$\det(I-uB_\sigma)=\prod_{p}\big(1-\sigma(p)u^{|p|}\big)$, the product
running over primes (primitive cyclic classes of closed non-backtracking
walks), where $\sigma(p)=(-1)^{\varphi(z(p))}$ and $z(p)\in Z(G)$ is the
edge-multiplicity parity vector of $p$. Averaging over the family
diagonalizes this product.

\begin{proposition}[Kernel-averaged $L$-identity]\label{prop:Lident}
With $W$, $\pi$, $\mathcal F$ as above, $\varepsilon_p=(-1)^{\pi(z(p))}$,
and $B$ the \emph{unsigned} non-backtracking operator, for
$|u|<\tfrac1{d-1}$,
\[
\mathbb E_{\sigma\in\mathcal F}\,\log\det(I-uB_\sigma)
=\tfrac12\log\det(I-u^2B)
-\tfrac12\!\!\sum_{p\,:\,z(p)\in W}\!\!
\log\frac{1+\varepsilon_pu^{|p|}}{1-\varepsilon_pu^{|p|}}\,.
\]
\end{proposition}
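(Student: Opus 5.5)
Take the logarithm of Bass's product and average prime by prime. For $|u|<1/(d-1)$ the product converges absolutely, since the number of primes of length $\ell$ is at most $\tr B^\ell\le 2m(d-1)^\ell$. Hence
\[
\log\det(I-uB_\sigma)=\sum_p\log\big(1-\sigma(p)u^{|p|}\big),
\]
with the series converging absolutely and uniformly in $\sigma$. The finite average over $\mathcal F$ can therefore be taken term by term. The averaging lemma is the annihilator computation already used in the proof of Proposition~\ref{prop:master}. Write $\sigma(p)=(-1)^{\varphi(z(p))}$ with $\varphi=\varphi_0+\psi$ and $\psi$ uniform in $\operatorname{Ann}(W)$. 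If $z(p)\in W$, then $\sigma(p)=\varepsilon_p$ for every member of the family. If $z(p)\notin W$, then $\psi(z(p))$ is a uniform bit, so $\sigma(p)=\pm1$ with probability $\tfrac12$ each, independently of $\varphi_0$.

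Next compute the two kinds of averaged factor. For $z(p)\notin W$, with $x=u^{|p|}$,
\[
\tfrac12\log(1-x)+\tfrac12\log(1+x)=\tfrac12\log(1-x^2).
\]
For $z(p)\in W$ the factor is $\log(1-\varepsilon_p x)$. I will rewrite this as
\[
\log(1-\varepsilon_px)=\tfrac12\log(1-x^2)-\tfrac12\log\frac{1+\varepsilon_px}{1-\varepsilon_px},
\]
which holds because $1-x^2=(1-\varepsilon_px)(1+\varepsilon_px)$ when $\varepsilon_p^2=1$. Summing over all primes, every prime then contributes $\tfrac12\log(1-u^{2|p|})$, and the confined primes carry the extra correction term exactly as in the statement.

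It remains to identify $\sum_p\log(1-u^{2|p|})$ with $\log\det(I-u^2B)$. This is Bass's (Ihara's) product for the unsigned operator, $\det(I-tB)=\prod_p(1-t^{|p|})$, evaluated at $t=u^2$. It is valid because $|u^2|<1/(d-1)$ whenever $|u|<1/(d-1)\le1$. So the full sum equals
\[
\tfrac12\log\det(I-u^2B)-\tfrac12\sum_{z(p)\in W}\log\frac{1+\varepsilon_pu^{|p|}}{1-\varepsilon_pu^{|p|}},
\]
which is the claimed identity.

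The main obstacle is bookkeeping of conventions rather than any deep step. First, $\log\det$ must be taken as the branch given by the convergent series, so that the exchange of logarithm and product is legitimate; within the stated disk all factors are near $1$ and the principal branch works. Second, primes must be counted consistently: Bass's product runs over primitive cycle classes with orientation, and the unsigned and signed products must use the same indexing. Third, the parity $z(p)$ must be a class invariant, which holds because rotation does not change edge multiplicities. Once these are fixed, the proof is the two-line parity average above. The point of the $u^2$ in the final formula is that the escaped primes only see $\tfrac12\log\det(I-u^2B)$, whose singularities lie at $|u|=(d-1)^{-1/2}$. That remark belongs to the interpretation, not the proof.
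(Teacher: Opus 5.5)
Your proposal is correct and follows the paper's proof essentially step for step. You average the logarithm of Bass's product prime by prime, and you use the annihilator computation of Proposition~\ref{prop:master} to freeze $\sigma(p)=\varepsilon_p$ when $z(p)\in W$ and to make it a fair coin otherwise. You then collect $\tfrac12\log(1-u^{2|p|})$ over all primes into $\tfrac12\log\det(I-u^2B)$, with the confined primes carrying the correction term. The extra points you check, which the paper leaves implicit, are handled correctly: the validity of the unsigned product at $t=u^2$, the branch of the logarithm, and the class-invariance of $z(p)$.
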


\begin{proof}
As in Proposition~\ref{prop:master}, $\varphi(z)$ is frozen at $\pi(z)$
over $\mathcal F$ when $z\in W$ and is a fair coin otherwise. Hence
$\mathbb E\log(1-\sigma(p)u^{|p|})$ equals
$\log(1-\varepsilon_pu^{|p|})$ for parity-confined primes and
$\tfrac12\log(1-u^{2|p|})$ for the rest. Collecting the coin factor over
\emph{all} primes gives
$\tfrac12\log\prod_p(1-(u^2)^{|p|})=\tfrac12\log\det(I-u^2B)$, and the
per-prime correction for confined primes is
$\log(1-\varepsilon u^{\ell})-\tfrac12\log(1-u^{2\ell})
=\tfrac12\log\frac{1-\varepsilon u^{\ell}}{1+\varepsilon u^{\ell}}$.
Absolute convergence for $|u|(d-1)<1$ follows from
$\#\{p:|p|=\ell\}\le\tfrac{2m}{\ell}(d-1)^{\ell}$.
\end{proof}

Both sides transfer to the adjacency side member-wise via
$\det(I-uB_\sigma)=(1-u^2)^{m-n}\det(I-uA_\sigma+u^2(d-1)I)$. The
identity was verified exactly on the prism ($\mathcal F$ of size $2$,
$W$ spanned by the three quadrilaterals): the coefficient form
$\mathbb E_{\mathcal F}\tr B_\sigma^k$ matches the prime decomposition
for all $k\le14$ to machine precision, and the displayed identity holds
to $8$ digits at $u=0.2$ under truncation at $|p|\le14$.

The structural content: $\tfrac12\log\det(I-u^2B)$ is analytic exactly
in $|u|<(d-1)^{-1/2}$, the Ramanujan radius; \emph{primes whose
parity escapes $W$ contribute the Ramanujan rate automatically, and the
entire deviation of the averaged $L$-function from that rate is carried
by parity-confined primes.} At the level of the averaged $L$-function, the sign problem has
become a counting problem. We emphasize the scope of the identity: the
domain of analyticity of $\mathbb E\log\det$ is governed by the worst
member of $\mathcal F$, so it does not by itself bound
$\min_{\sigma\in\mathcal F}\rho$; the quantitative route of this
paper is Corollary~\ref{cor:certificate}, and the identity's role is
to explain the mechanism and to isolate Question~\ref{q:confined}.

\begin{question}[Confined-prime counting]\label{q:confined}
For which graph sequences does
$\#\{p:|p|=\ell,\ z(p)\in W\}\le C\,(\sqrt{d-1}+\delta)^{\ell}$ hold?
Parity confinement means the odd-multiplicity edge set of $p$ is a sum
of short even cycles; the extreme case $z(p)=0$ (totally even primes)
counts non-backtracking closed walks retracing every edge evenly, which
grow at rate $\sqrt{d-1}$, and each nonzero $z\in W$ should contribute a
comparable rate with an exponential penalty in $|z|$. Caution from the
prism: on graphs so small that $W$ nearly fills the cycle space,
confinement is no restriction (measured rate $\approx1.84$ against
$\sqrt2\approx1.41$ at $d=3$); the question is asymptotic, with
$\dim W$ per ball bounded while $\dim Z(G)\to\infty$.
\end{question}

\section{The $z=0$ slice: totally even walks}\label{sec:even}

Sub-question of Question~\ref{q:confined} at $z=0$: count closed
non-backtracking walks all of whose edge multiplicities are even
(``totally even'' walks). The lower bound holds at the conjectured rate
unconditionally; the upper bound fails without a hypothesis on dense
cores, and the failure mechanism is instructive.

\begin{proposition}[Slice identity and doubling]\label{prop:slice}
Let $\mathrm{Ev}_\ell$ denote the number of totally even closed
non-backtracking walks of length $\ell$ in $G$. Then
$\mathrm{Ev}_\ell=\mathbb E_{\sigma}\,\tr(B_\sigma^{\ell})$, the average
over all signings; and the doubling map $w\mapsto w^2$ injects closed
non-backtracking walks of length $\ell/2$ into totally even walks of
length $\ell$, so
$\mathrm{Ev}_\ell\ge\tr(B^{\ell/2})$ and
$\liminf_\ell \mathrm{Ev}_\ell^{1/\ell}\ge\rho(B)^{1/2}=\sqrt{d-1}$.
\end{proposition}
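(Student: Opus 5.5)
The proof has three parts: the averaging identity, the injectivity of the doubling map, and the growth-rate conclusion.

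\textbf{Averaging identity.} $B_\sigma$ acts on directed edges, with $(B_\sigma)_{(u\to v),(v\to w)}=\sigma_{vw}$ for $w\ne u$ (or the same weighting up to a convention that changes signs by a factor of each cycle's product). Under any of these conventions, $\tr(B_\sigma^\ell)$ expands over closed non-backtracking walks $w$ of length $\ell$, counted with a starting directed edge. Each such walk carries weight $\prod_e\sigma_e^{m_e(w)}=(-1)^{s\cdot z(w)}$, exactly as in the proof of Lemma~\ref{lem:parity}. The walks here are cyclically non-backtracking, which is what the trace counts. For uniform $s\in\F_2^E$ we have $\mathbb E(-1)^{s\cdot z}=[z=0]$. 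Hence $\mathbb E_\sigma\tr(B_\sigma^\ell)$ counts the closed non-backtracking walks with $z(w)=0$, which is $\mathrm{Ev}_\ell$. This is the annihilator computation of Proposition~\ref{prop:master} with $W=\{0\}$.

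\textbf{Doubling injection.} Let $\ell$ be even and let $w=(e_1,\dots,e_{\ell/2})$ be a closed non-backtracking walk, recorded as a cyclic sequence of directed edges with a marked start, so that $e_{\ell/2}\to e_1$ is also a non-backtracking step. The concatenation $w^2=(e_1,\dots,e_{\ell/2},e_1,\dots,e_{\ell/2})$ is then closed and non-backtracking. The only new junctions are $e_{\ell/2}\to e_1$, which are legal by the cyclic condition. Every edge multiplicity doubles, so $w^2$ is totally even. The map is injective because $w$ is recovered as the first half of $w^2$. Therefore $\mathrm{Ev}_\ell\ge\#\{\text{closed NB walks of length }\ell/2\}=\tr(B^{\ell/2})$.

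\textbf{Growth rate.} For connected $d$-regular $G$ with $d\ge3$, $B$ is a nonnegative matrix whose row sums are all $d-1$, so $\rho(B)=d-1$. It is irreducible, since the non-backtracking graph of a connected graph with minimum degree at least $2$ that is not a cycle is strongly connected. By Perron--Frobenius, $\tr(B^j)$ is at least the sum of the $j$-th powers of the peripheral eigenvalues $\rho\omega^i$, where $\omega$ is a primitive $h$-th root of unity and $h$ is the period. This sum equals $h\rho^j$ when $h\mid j$ and $0$ otherwise. The remaining eigenvalues have modulus strictly less than $\rho$, so they are negligible. Their sum could in principle be negative, but it is $o(\rho^j)$, so it does not affect the rate.

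This periodicity is the one delicate point, and it is why the statement uses $\liminf$. If $h>1$, then $\tr(B^j)$ vanishes for $j\not\equiv0\pmod h$. The remedy is the alternative lower bound $\tr(B^j)\ge\max_x(B^j)_{xx}$ together with a walk-splicing argument: closed NB walks at a fixed directed edge, of lengths in a set with gcd $h$, concatenate. This gives positivity for large multiples of $h$. For the other residues one interpolates by inserting a bounded detour, which costs only a constant factor. The simplest route is to prove $\liminf$ along $\ell\in 2h\mathbb N$, which gives $\mathrm{Ev}_\ell\ge h(d-1)^{\ell/2}(1-o(1))$. The remaining residues can then be handled by the observation that $\mathrm{Ev}_\ell$ is supermultiplicative under concatenation of totally even walks at a common directed edge, up to bounded connectors. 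In every case $\mathrm{Ev}_\ell^{1/\ell}\ge\rho(B)^{1/2}(1-o(1))=\sqrt{d-1}\,(1-o(1))$.

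The degenerate case $d=2$ (a cycle) is immediate: $\tr(B^j)=2n\,[n\mid j]$ and $\rho(B)=1$.
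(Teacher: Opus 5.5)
Your averaging identity and doubling injection are exactly the paper's proof. The paper expands $\tr(B_\sigma^\ell)$ with sign $\prod_e\sigma_e^{m_e(w)}$, averages away odd multiplicities, and recovers $w$ as the first half of $w^2$, the seam being a legal transition.

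\textbf{The gap: your periodicity repair.} The paper reads the $\liminf$ directly off $\mathrm{Ev}_\ell\ge\tr(B^{\ell/2})$. You rightly notice that periodicity makes this valid only along lengths with $\tr(B^{\ell/2})>0$. The paper's one-line deduction has the same limitation. But the repair you sketch does not work.
\begin{itemize}
\item If $h\nmid j$, there are no closed non-backtracking walks of length $j$ at all. So every diagonal entry of $B^j$ vanishes, and no ``bounded detour'' can create one.
\item Supermultiplicativity of totally even walks at a fixed directed edge is correct, but it only reproduces residues you already have. Starting from doubled walks, you never leave $\ell\in2h\mathbb N$.
\item What is missing is a \emph{seed}: a totally even closed non-backtracking walk whose half-length is $\not\equiv0\pmod h$. ``Bounded connectors'' do not supply one for free. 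A connector must be traversed an even number of times and turned around without backtracking, which is the seed problem again.
\item ``In every case'' fails for odd $\ell$: then $\mathrm{Ev}_\ell=0$, since $\ell=\sum_e m_e$ with every $m_e$ even.
\item The cycle is not immediate. On $C_n$ one has $\mathrm{Ev}_\ell=2n\,[2n\mid\ell]$, so the $\liminf$ over even $\ell$ is $0<1=\sqrt{d-1}$.
\end{itemize}

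\textbf{How to close it for $d\ge3$.} The situation is simpler than your general-$h$ discussion suggests. Let $v_1,v_2,v_3$ be neighbours of $v$. The out-neighbourhoods of $(v_1\to v)$ and $(v_2\to v)$ in the non-backtracking digraph overlap in $(v\to v_3)$ and together cover all out-edges at $v$. So all out-edges at $v$ lie in one cyclic class $c(v)$. Then the oriented edge $u\to v$ forces $c(v)=c(u)+1$, and $v\to u$ forces $c(u)=c(v)+1$. Hence $h\mid 2$, with $h=2$ exactly when $G$ is bipartite.

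The only open residue is therefore bipartite $G$ with $\ell\equiv2\pmod4$. A seed there comes from a theta subgraph with branch vertices $x,y$ in opposite colour classes and paths $P_1,P_2,P_3$ of lengths $a_1,a_2,a_3$. The walk $P_1P_2^{-1}P_3P_1^{-1}P_2P_3^{-1}$ is cyclically non-backtracking, traverses every edge twice, and has odd half-length $a_1+a_2+a_3$. For example, $\Theta(1,3,3)\subset K_{3,3}$ gives length $14$.

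Given such a seed $w_0$ of length $\ell_0$ starting at $e$, consider the walks $w_0u^2$, where $u$ ranges over the $(B^{2m})_{ee}\sim\tfrac{2}{nd}(d-1)^{2m}$ closed non-backtracking walks at $e$. These are distinct, cyclically non-backtracking and totally even, so $\mathrm{Ev}_{\ell_0+4m}\ge c\,(d-1)^{2m}$. You would still need to show that such a theta exists in every connected bipartite $d$-regular graph.

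Alternatively, state the conclusion along $\ell\in2h\mathbb N$. That is exactly what both your argument and the paper's deduction establish.
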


\begin{proof}
$\tr(B_\sigma^\ell)$ is the signed count of closed non-backtracking
walks, the sign being $\prod_e\sigma_e^{m_e(w)}$; averaging kills every
walk with an odd multiplicity. If $w$ is cyclically non-backtracking
then so is $w^2$ (the seam repeats a legal transition of $w$), its
multiplicities double, and $w$ is recovered as the first half.
\end{proof}

\begin{proposition}[Dense trapping]\label{prop:trap}
Let $d\ge4$ and suppose $G$ contains $K_d$ as a subgraph (realizable in
connected $d$-regular graphs: two copies of $K_d$ joined by a perfect
matching). Then there are $c_d>0$ and $\ell_0(d)$, independent of $G$
and $n$, with $\mathrm{Ev}_\ell(G)\ge c_d\,(d-2)^{\ell}$ for all even
$\ell\ge\ell_0$. Since $d-2>\sqrt{d-1}$ for every $d\ge4$, the confined
mass exceeds the profile $n(\sqrt{d-1}\,)^{\ell}$ throughout
$\ell\ge C_d\log n$.
\end{proposition}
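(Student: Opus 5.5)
The plan is to reduce to the fixed finite graph $K_d$ and then use Perron–Frobenius for the non-backtracking operator of $K_d$, together with the slice identity of Proposition~\ref{prop:slice}. The doubling injection of Proposition~\ref{prop:slice}, and ``go out, turn around, come back'' constructions in general, only give rate $\sqrt{d-2}$. That is too weak here, so I will not use explicit constructions. Instead I will show that a fixed positive fraction of \emph{all} closed non-backtracking walks in $K_d$ are totally even.

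\emph{Step 1: monotonicity.} A closed non-backtracking walk in the subgraph $H\cong K_d$ is also one in $G$, with the same edge multiplicities. Hence $\mathrm{Ev}_\ell(G)\ge\mathrm{Ev}_\ell(H)$, and it suffices to bound $\mathrm{Ev}_\ell(H)$, which does not depend on $G$ or $n$.

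\emph{Step 2: slice identity on $H$.} By Proposition~\ref{prop:slice} applied to $H$,
\[
\mathrm{Ev}_\ell(H)=\mathbb E_\sigma\tr(B_{H,\sigma}^\ell)=2^{-r}\sum_{[\sigma]}\tr(B_{H,\sigma}^\ell),
\]
where the sum runs over the $2^r$ switching classes, $r=\dim Z(H)$. Write $B_H$ for the unsigned operator. Entrywise $|B_{H,\sigma}|=B_H$. Since $H$ is $(d-1)$-regular, $\rho(B_H)=d-2$.

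\emph{Step 3: irreducibility and aperiodicity of $B_H$.} $H$ is connected with minimum degree $d-1\ge3$ and is not a cycle, so $B_H$ is irreducible. It contains closed non-backtracking walks of lengths $3$ (a triangle) and $4$ (a quadrilateral, which needs $d\ge4$). The gcd is $1$, so $B_H$ is primitive. It follows that $\tr(B_H^\ell)=(d-2)^\ell(1+o(1))$.

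\emph{Step 4: control the signed classes.} By Wielandt's theorem, $\rho(B_{H,\sigma})\le d-2$, with equality only if $B_{H,\sigma}=e^{i\theta}DB_HD^{-1}$ for a diagonal unitary $D$. Since everything is real, $\theta\in\{0,\pi\}$. In either case $\tr(B_{H,\sigma}^\ell)=\tr(B_H^\ell)$ for even $\ell$. Every other class has $\rho(B_{H,\sigma})\le\lambda_d<d-2$, and then $|\tr(B_{H,\sigma}^\ell)|\le C_d\lambda_d^\ell$, with $C_d$ bounded by the dimension $d(d-1)$ of $B_H$ via the spectrum. Summing over classes gives, for even $\ell$,
\[
\mathrm{Ev}_\ell(H)\ge 2^{-r}\big(\tr(B_H^\ell)-(2^r-1)C_d\lambda_d^\ell\big)\ge c_d(d-2)^\ell
\]
for all $\ell\ge\ell_0(d)$. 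This is the main step. The obstacle is making the strict gap rigorous, i.e.\ correctly invoking the equality case of Wielandt and checking that primitivity excludes any other unimodular phase. If that is delicate, I would fall back on the cruder statement that only finitely many classes occur and each non-maximal one has strictly smaller spectral radius, which already suffices since $d$ is fixed.

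\emph{Step 5: comparison.} For $d\ge4$ we have $(d-2)^2=d^2-4d+4>d-1$, because $d^2-5d+5>0$ for $d\ge4$. So $d-2>\sqrt{d-1}$. Then $c_d(d-2)^\ell> n(\sqrt{d-1})^\ell$ as soon as $\ell\log\big((d-2)/\sqrt{d-1}\big)>\log n+\log(1/c_d)$, which holds for all $\ell\ge C_d\log n$. Finally, the realizability claim is immediate: in two copies of $K_d$ joined by a perfect matching, each vertex has degree $(d-1)+1=d$, and the graph is connected.
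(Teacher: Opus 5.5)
Your proposal is correct and follows the paper's proof essentially step for step. You restrict to the $K_d$ copy, apply the slice identity, and use primitivity of $B_{K_d}$ together with the equality case of Wielandt's theorem to isolate the classes with $\rho(B_\sigma)=d-2$, which contribute $\tr B^\ell$ at even $\ell$; the remaining finitely many classes are then bounded by a strictly smaller spectral radius. The differences are cosmetic: you get the spectral gap of the unsigned $B_{K_d}$ from aperiodicity (closed walks of lengths $3$ and $4$) rather than from the Ihara--Bass spectrum, and you never need to identify the equality classes as exactly the all-plus and all-minus switching classes, since each one contributes $\tr B^\ell\ge0$ at even $\ell$ whatever their number.
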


\begin{proof}
Totally even walks of the $K_d$ copy are totally even walks of $G$, and
by Proposition~\ref{prop:slice} their number is
$2^{-|E(K_d)|}\sum_{\sigma\in\{\pm1\}^{E(K_d)}}\tr(B_{K_d,\sigma}^\ell)$.
The unsigned $B_{K_d}$ is irreducible and aperiodic with Perron value
$d-2$; its remaining eigenvalues have modulus at most
$\max(1,\sqrt{d-2})<d-2$ (Ihara-Bass from
$\operatorname{spec}A(K_d)=\{d-1,(-1)^{d-1}\}$). By the equality case of
Wielandt's theorem, a signed version satisfies
$\rho(B_{K_d,\sigma})=\rho(B_{K_d})$ if and only if
$B_{K_d,\sigma}=\pm D B_{K_d}D^{-1}$ for a diagonal $D$ with
$|D|=I$, i.e.\ iff $\sigma$ is switching-equivalent to the all-plus or
all-minus signing. Hence $\rho_2:=\max$ over the remaining classes is
strictly below $d-2$, the two trivial classes contribute
$\tr B^\ell+(-1)^\ell\tr B^\ell=2(d-2)^\ell(1+o(1))$ at even $\ell$, and
the class average is at least
$2^{-r}\big[2(d-2)^\ell(1-o(1))-2^{r}\,d(d-1)\rho_2^{\ell}\big]$ with
$r=\binom d2-d+1$.
\end{proof}

\begin{remark}[Necessity of a core hypothesis]\label{rem:core}
Since $0\in W$ always, Proposition~\ref{prop:trap} shows the bound of
Question~\ref{q:confined} cannot hold unconditionally once
$\ell\ge C_d\log n$: it forces the exclusion of $K_d$; for $d=4$
the same $K_4$ as Lemma~\ref{lem:k4}, which thus obstructs the program
twice. Any workable hypothesis must constrain the \emph{unsigned}
local growth, that is, the closed-walk counts $c_H(t)$ of condition
(ii) in the subcriticality definition of Section~\ref{sec:even}: the
trapped mass of Proposition~\ref{prop:trap} is carried by the two
$\pm$-trivial classes with only the constant discount
$2^{-\binom d2+d-1}$, so a hypothesis exempting the trivial classes
would be vacuous, since every non-$\pm$-trivial class of $K_4$ and of
$K_5$ has $\rho(B_\sigma)$ equal to $\sqrt{d_H-1}$ exactly ($\sqrt2$
and $\sqrt3$), so $K_4$ would satisfy such a hypothesis at $d=4$ while
violating the conclusion. $K_d$ does violate condition (ii), since
$c_{K_d}(t)$ grows at rate $d-2>\sqrt{d-1}$. On the Petersen graph
every non-trivial class likewise sits at $\rho(B_\sigma)=\sqrt2$:
every nontrivial signing of $K_4$ and of Petersen is exactly
Ramanujan.
\end{remark}

The upper bound matching Proposition~\ref{prop:slice} holds in the
dilute regime. Say $G$ is \emph{$(r_0,\delta)$-subcritical at scale
$\ell$} if every connected subgraph $H$ of $G$ with minimum degree
$\ge2$ and at most $\ell$ edges satisfies: (i) the cycle rank of $H$
is at most $r_0$; and (ii) the number $c_H(t)$ of closed
non-backtracking walks of length $t$ in $H$ obeys
$c_H(t)\le 2|E(H)|\,(t+1)^{3r_0}(1+\delta)^{t}$ for all $t\le8\ell$.
The notion is monotone: subcriticality at scale $\ell$ implies it
at every smaller scale, and all subsequent hypotheses are stated
through this single definition.
Condition (ii) holds, for instance, whenever every thread of $H$ (a
maximal path through degree-$2$ vertices) has length at least
$\lambda$, with $1+\delta=(2r_0-1)^{1/\lambda}$: closed walks in $H$
project to reduced cyclic words in a free group of rank $\le r_0$ whose
generators cost at least $\lambda$ steps each.

\begin{lemma}[Non-backtracking reachability]\label{lem:reach}
Let $H$ be connected with minimum degree $\ge2$, $s$ edges, and not a
cycle. Between any two directed edges of $H$ there is a
non-backtracking path of length at most $4s$.
\end{lemma}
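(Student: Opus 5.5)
We will prove the reachability bound by building the path in three pieces: leave the starting directed edge, travel along a shortest route in the underlying graph, and then arrive in the required direction. Let $\vec e=(u\to v)$ and $\vec f=(x\to y)$ be the two directed edges. Since $H$ has minimum degree $\ge2$, every vertex has an exit other than the edge just used. So any non-backtracking walk can be continued indefinitely. The only issue is steering it so that it enters $y$ along $x\to y$, within a linear number of steps.

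\textbf{Step 1: a base case.} First we show that from any directed edge one can reach a cycle of $H$ and traverse it in either orientation. Starting from $\vec e$, walk greedily without backtracking. Within at most $|V(H)|\le s$ steps some vertex repeats, which closes a cycle $C$; the walk so far is a ``lollipop'' consisting of a stem $P$ followed by $C$. Going once around $C$ and then back along $P$ reversed would require a U-turn at the start of $P$. This is exactly where the hypothesis that $H$ is not a cycle enters.

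\textbf{Step 2: the key claim.} The key claim we aim for is the following. For every directed edge $\vec e$ and every vertex $w$, both directed edges into $w$ along any chosen incident edge $g$ are reachable from $\vec e$ by non-backtracking paths of length $\le 4s$.

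\textbf{Step 3: gluing.} Take a shortest path $Q$ in $H$ from $v$ to $x$, of length at most $s$. If the first edge of $Q$ is not $\{u,v\}$ and the last edge of $Q$ is not $\{x,y\}$ reversed, then $\vec e\,Q\,\vec f$ is already non-backtracking. Otherwise we must correct the direction at one or both ends.

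\textbf{Step 4: the turning gadget, the main obstacle.} A direction correction is a ``turning gadget'': a closed non-backtracking detour that leaves an edge and comes back along the same edge in the opposite direction. Because $H$ is connected, has minimum degree $\ge2$ and is not a cycle, its 2-core is $H$ itself and contains two distinct cycles. We can then form either a lollipop ending in a cycle traversed and exited back along the stem, or a theta/dumbbell subgraph. The standard fact we will use is this: in a connected graph of minimum degree $\ge2$ that is not a cycle, the non-backtracking operator is irreducible. We prove that directly here with length control, since a bound is needed rather than mere reachability.

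For the length control we will charge edges, not vertices. The stem from an edge to the cycle, the cycle itself, and the return along the stem each use distinct edges, so each costs at most $s$. A careful accounting should give at most $2s$ for one turnaround. Adding the initial segment gives a total of at most $4s$ after overlaps are merged. We expect this bookkeeping, rather than the existence argument, to be the hardest part. In particular, arranging that the turnaround at the $\vec e$ end and the one at the $\vec f$ end share a single cycle is what keeps the total within $4s$ instead of $5s$ or $6s$.

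To make that work, we would choose the cycle $C$ to be a cycle through a vertex of degree $\ge3$ that is nearest in edge-count to the shortest-path route. We would then route both corrections through $C$, so that the combined walk $\vec e\to C\to\vec f$ has each edge of $H$ used at most about four times in total.
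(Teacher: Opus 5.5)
Your plan does not yet prove the lemma, because the length bound, which is the whole content of the statement, is never established. Step~2 is announced as ``the key claim we aim for'' but not proved, and Step~4 only says the accounting ``should'' work. Taken literally, your pieces cost a shortest path ($\le s$ edges) plus two turnarounds ($\le 2s$ each), about $5s$ in total. The device for getting under $4s$ is routing both corrections through one cycle $C$ so that each edge is used ``about four times''. It is not constructed, and it is not clear that a single cycle can always serve both ends.

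The existence part is also incomplete, because the Step~1 lollipop does not always reverse an edge. In a theta graph with branch vertices $p,q$, every cycle passes through $p$. So for the directed edge leaving $p$ along a thread, no lollipop has a stem beginning with that edge; the greedy walk just closes a cycle containing it. What is needed there is a figure-eight move: out along thread~1, back along thread~2, out along thread~3, home along thread~1. That is exactly the theta/dumbbell case you name but do not carry out.

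The missing idea is that no length bookkeeping is needed at all. Your remark that ``a bound is needed rather than mere reachability'' is where the approach goes wrong. The non-backtracking digraph has exactly $2s$ vertices, namely the directed edges of $H$. Once it is strongly connected, a \emph{shortest} directed path between two of its vertices repeats no vertex, so it has at most $2s-1$ arcs. The corresponding non-backtracking path then has length at most $2s\le 4s$.

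This is the paper's proof. Strong connectivity is taken from Kotani--Sunada, with the theta/dumbbell reversal as the concrete mechanism, and the length bound is the trivial diameter bound for a digraph on $2s$ vertices. So it suffices to show, qualitatively, that every directed edge $(u\to v)$ can reach its reverse $(v\to u)$. Then all directed edges at a vertex are mutually reachable, and connectivity of $H$ spreads this to every directed edge.
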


\begin{proof}
The non-backtracking digraph of a connected graph with minimum degree
$\ge2$ that is not a cycle is strongly connected \cite{ks00};
concretely, a theta or dumbbell subgraph, present because some
vertex has degree $\ge3$, lets a walk reverse its direction of
travel along any thread. A strongly connected digraph on the $2s$
directed edges has diameter at most $2s-1\le4s$.
\end{proof}

\begin{proposition}[Even-walk upper bound, dilute regime]\label{prop:upper}
If $G$ is $d$-regular and $(r_0,\delta)$-subcritical at scale $\ell$,
then
\[
\mathrm{Ev}_\ell(G)\;\le\;
nd\,(C\ell)^{\,3(r_0+1)^2}\,
\Big((1+\delta)^{2(r_0+2)}\sqrt{d-1}\Big)^{\ell}
\]
for an absolute constant $C$. Combined with
Proposition~\ref{prop:slice}, in this regime
$\mathrm{Ev}_\ell=\big(\sqrt{d-1}\big)^{\ell(1+o(1))}$: the $z=0$ case
of Question~\ref{q:confined} is settled.
\end{proposition}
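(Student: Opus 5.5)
We encode each totally even closed non-backtracking walk $w$ of length $\ell$ by its support and a short description of how it is traversed, and count the descriptions. Let $H=\operatorname{supp}(w)$. Since every edge is used at least twice, $|E(H)|\le\ell/2$. Trimming leaves from $H$ does not change it, because a closed non-backtracking walk never enters a pendant edge. So $H$ is connected with minimum degree $\ge2$ and at most $\ell$ edges, and subcriticality applies: its cycle rank satisfies $r\le r_0$. Write $s=|E(H)|$. Then $H$ has $s-r+1$ vertices, and $H$ is a subdivision of a multigraph with at most $2(r-1)$ branch vertices and $3(r-1)$ threads, or $H$ is a cycle.

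\emph{Fresh runs.} Read $w$ from its start and call a step \emph{fresh} if it traverses an edge for the first time. The key observation is that the maximal runs of fresh steps are exactly the ears of an ear decomposition of $H$. The first run is a path or a closed cycle. Every later fresh run starts at an already visited vertex and continues until it hits an already visited vertex, where it must end, since the next step either closes the ear or leaves along an old edge. Each ear raises the cycle rank by exactly one, so the number of fresh runs is $r+1$ at most, or $r$ if the first run closes a cycle. Between fresh runs the walk moves only along old edges, on a subgraph $H_j\subseteq H$ with minimum degree $\ge2$ after trimming; these are the \emph{stale segments}.

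\emph{Encoding.} Record the starting directed edge ($\le nd$ choices). For each of the at most $r_0+1$ fresh runs, record its length and its starting time ($\le\ell^2$ each). Record the stale segments as closed-walk data in the current $H_j$: a stale segment of length $t$ from directed edge $e$ to $f$ is a non-backtracking path in $H_j$. Using Lemma~\ref{lem:reach} I close it up with a path of length $\le4s\le 2\ell$, and its number is then at most $c_{H_j}(t+4s)\le 2s(t+4s+1)^{3r_0}(1+\delta)^{t+4s}$ by condition (ii). This needs $t+4s\le8\ell$, which holds. The fresh steps themselves are chosen with at most $d-1$ options each.

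The crude product $(d-1)^{s}(1+\delta)^{\sum t+O(r_0 s)}$ already gives the claim. Fresh steps number exactly $s\le\ell/2$, giving $(d-1)^{s}\le(\sqrt{d-1})^{\ell}$. Stale steps contribute at most $(1+\delta)^{\ell}$, and the closing detours contribute $(1+\delta)^{4s(r_0+1)}\le(1+\delta)^{2\ell(r_0+1)}$. Together these give the exponent $2(r_0+2)$ after absorbing $(1+\delta)^{\ell}$, and the polynomial factors multiply to $(C\ell)^{O(r_0^2)}$, namely $(r_0+1)$ segments, each carrying a factor $(\ell+1)^{3r_0}\cdot \ell^{2}$.

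\emph{Main obstacle.} The delicate point is the fresh-step count when the walk's second half is not simply a retrace. The bound $(d-1)^{s}$ with $s\le\ell/2$ uses only the total-evenness, so this part is fine. The real difficulty is that the stale segments live in subgraphs $H_j$ that are themselves not necessarily cycles, and may even be single cycles. When $H_j$ is a cycle, Lemma~\ref{lem:reach} fails, so I would treat that case separately: a non-backtracking walk on a cycle is determined by its direction and length, so it contributes only a factor $2\ell$. I expect getting the exponent $3(r_0+1)^2$ exactly right, rather than merely $O(r_0^2)$, to require careful bookkeeping of these per-segment polynomial factors, but this affects only the constant $C$ and the stated exponent form.
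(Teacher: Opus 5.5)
This is essentially the paper's proof, and it is correct. Fresh runs are ears, so their number is controlled by the cycle rank; fresh steps cost $(d-1)^{s}$ with $s\le\ell/2$; and each of the at most $r_0+1$ stale segments is closed up via Lemma~\ref{lem:reach} and counted through condition~(ii). The $(1+\delta)$-exponent lands within $2(r_0+2)\ell$, as in the paper.

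The one discrepancy is the polynomial prefactor, and it is not just a matter of the constant $C$. Recording both the start and the length of every run costs $\ell^{2(r_0+1)}$. Once you also include the factor $2s$ from~(ii), which is dropped in your tally, and the sum over closing lengths (the closing path has length at most, not exactly, $4s$), your exponent is $(r_0+1)(3r_0+4)$. To reach $3(r_0+1)^2$, cut runs at every landing on an already-visited vertex. Contrary to your description, a run need not stop there, but the cut runs are then exactly the $r$ ears. Their ends are visible to the decoder, so only the start times need recording, $\binom{\ell}{r}\le\ell^{r_0}$ choices, as in the paper.
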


\begin{proof}
Fix the starting directed edge ($\le nd$ choices) and expose the walk
step by step; let $S_i$ be the subgraph of edges used through step $i$
and $S=S_{\ell}$ the support. Since multiplicities are even,
$s:=|E(S)|\le\ell/2$; since a degree-$1$ vertex of $S$ would force a
backtrack, $S$ has minimum degree $2$; by (i) its cycle rank $r$ is at
most $r_0$.

Call a step \emph{fresh} if it traverses an edge not in $S_{i-1}$,
\emph{stale} otherwise. The key structural fact: after a fresh step
into a previously unvisited vertex $v$, the only used edge at $v$ is
the arrival edge, which is excluded by non-backtracking, so every
continuation is again fresh. Hence maximal fresh runs continue until
they land on an already-visited vertex, i.e.\ each maximal fresh run is
an open ear attached to the current support, and the number of fresh
runs equals the number of ears, which is exactly the cycle rank:
\emph{the walk performs precisely $r\le r_0$ fresh runs} (the first
builds the initial cycle), interleaved with at most $r+1$ maximal stale
segments.

Encode the walk by: the start edge; the set of run-start times, at most
$\binom{\ell}{r}\le\ell^{\,r_0}$ choices (run \emph{ends} are visible
to a decoder who knows $G$ and the visited set: a run ends exactly when
its landing vertex is old); the fresh-edge choices, at most $(d-1)^{s}$
in total; and the stale segments. A stale segment of length $t$ is a
non-backtracking path inside $S$, and by Lemma~\ref{lem:reach} any such
path prolongs to a closed walk of length $\le t+4s$, so their number is
at most $2s\cdot c_S(t+4s)\le 4s^2(t+4s+1)^{3r_0}(1+\delta)^{t+4s}$ by
(ii) (if $S$ is a cycle the count is $1$). Multiplying over the
$\le r_0+1$ segments with total stale length $\ell-s$ and collecting,
\[
\mathrm{Ev}_\ell\le nd\,\ell^{\,r_0}\!\!\sum_{s\le\ell/2}\!
\big(4s^2(4\ell)^{3r_0}\big)^{r_0+1}
(d-1)^{s}(1+\delta)^{(\ell-s)+4s(r_0+1)} .
\]
Every factor of the summand is nondecreasing in $s$, so it is maximized
at $s=\ell/2$, where
$(d-1)^{\ell/2}(1+\delta)^{\ell/2+2\ell(r_0+1)}\le
\big((1+\delta)^{2(r_0+2)}\sqrt{d-1}\big)^{\ell}$; the polynomial
prefactors collect into $(C\ell)^{3(r_0+1)^2}$.
\end{proof}

\begin{remark}[Scope]\label{rem:scope}
Random $2$-lift towers are $(O(1),o(1))$-subcritical at scale
$c\log n$ with high probability: balls of that radius have bounded
cycle rank and long threads, and the measured
$\mathbb E_\sigma[\tr B_\sigma^{\ell}]^{1/\ell}\approx1.53$ at
$\ell=30$, $n=128$, $d=3$ against $\sqrt2$ is consistent with the
proposition's $(1+\delta)$-corrected rate. Constant cycle density
(quadrilaterals proportional to $n$) violates hypothesis (i): supports
with $s$ edges can have rank $\Theta(\eta s)$. The run-schedule cost
survives this: $\binom{\ell}{r}\le(e\ell/r)^{r}=(1+o_\eta(1))^{\ell}$
for $r\le\eta\ell$, so the sole obstruction to the constant-density
extension is the stale-path entropy bound for unbounded-rank supports,
which is an unsigned growth requirement in the spirit of condition
(ii); Remark~\ref{rem:core} explains why any such bound must
constrain the trivial classes. That extension is carried out below via
Lemmas~\ref{lem:window} and~\ref{lem:rank}.
\end{remark}

\begin{remark}[Toward constant density: the critical gadget]\label{rem:gadget}
At $d=3$ the candidate obstruction to the unbounded-rank entropy bound
is a quadrilateral with exits at two opposite vertices: oscillation
between its branch points along the two length-$2$ arcs would give
path growth $2^{2/4}=\sqrt2=\sqrt{d-1}$, exactly critical.
Non-backtracking forbids it: arriving along one short arc, a walk may
not reverse into it, so bouncing between the two short arcs is
deterministic, and branching entropy is carried only by the long arm.
Exactly: on the theta graph $\Theta(2,2,\lambda)$ \emph{every}
switching class satisfies $\rho(B_\sigma)\le\rho(B)=1+O(1/\lambda)$
(measured $1.348, 1.268, 1.222, 1.190, 1.150$ for
$\lambda=3,5,7,9,13$, all nontrivial classes strictly below), and a
ring of four such quadrilaterals separated by $\lambda$-threads
behaves identically ($\rho\le1.25$ at $\lambda=6$, rank $5$). The
general principle: if every ball of radius $R$ contains at most one
cycle, then at any branch vertex of any support at most one
continuation returns within $2R$ steps; two short returns would
exhibit two cycles in one ball. The remaining entropy bound therefore
takes the quantified form: paths in any support grow at rate
$1+O(\log d/R)$; ranks obey $r\le C\ell/R$, keeping the run-schedule
cost $\binom{\ell}{r}$ at $(1+o(1))^{\ell}$; and the per-segment
polynomial factors force $R\ge C\log\log n/\delta$, the scale at
which the bicycle-free hypothesis of \cite{mop20} reappears, now in
service of the parity family rather than the uniform measure.
\end{remark}

The entropy half of that program is now a lemma.

\begin{lemma}[Window path bound]\label{lem:window}
Suppose every ball of radius $R$ in $G$ contains at most one cycle.
Then for every subgraph $S\subseteq G$ with $s$ edges, every directed
edge of $S$, and every $t\ge1$, the number of non-backtracking paths
of length $t$ in $S$ from that edge is at most
$\big(2s(R+3)\big)^{\lceil t/R\rceil}$; in particular the growth rate
is $\exp\!\big(O(\log(sR)/R)\big)$.
\end{lemma}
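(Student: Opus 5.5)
Split the path into consecutive windows of length $R$ (the last possibly shorter), so there are $\lceil t/R\rceil$ windows. Then show that, given where a window starts, its continuation is fixed by at most $2s(R+3)$ pieces of data. Multiplying over windows gives the stated bound. The whole argument lives inside the ball $B$ of radius $R$ around the window's starting vertex $v$. Every vertex visited during a window of length $\le R$ lies in $B$. By hypothesis, $B\cap S$ is a forest plus at most one cycle $C$.

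The key step is a normal form for non-backtracking paths in a graph with at most one cycle. Let $P$ be a non-backtracking path of length $\le R$ in $B\cap S$ that starts from a given directed edge.
\begin{itemize}
\item If $P$ never enters $C$, the path is a geodesic in a forest. It is then the unique non-backtracking path to its terminal directed edge.
\item Otherwise $P$ consists of three parts: a tree segment to $C$, some number $w\ge0$ of full turns around $C$ in one of two directions, and an exit along a tree segment. Non-backtracking forbids reversing direction on $C$, and it also forbids re-entering a tree branch after leaving it. Otherwise the path would have to return along a tree edge, which is a backtrack, or a second cycle would appear in the ball.
\end{itemize}
So $P$ is determined by three things:
\begin{itemize}
\item its terminal directed edge, at most $2s$ choices;
\item its direction on $C$, at most $2$ choices;
\item its winding number $w$, with $w|C|\le R$, so at most $R+1$ values.
\end{itemize}
One must check that a fixed direction and winding number give at most one path with a given endpoint edge. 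This holds because within the unicyclic graph the geodesic segments are unique. Allowing one more slot for the "never touches $C$" case, the total is at most $2s\cdot 2(R+1)+2s\le 2s(R+3)$ after a little care. I would write this as: endpoint edge, then a label in $\{\text{tree}\}\cup\{\pm\}\times\{0,\dots,R\}$, and bound the label count loosely by $R+3$ by merging the two directions when $w=0$. If that accounting is too tight, I would accept $2s(2R+3)$ and note that only the rate matters.

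Then iterate. The terminal directed edge of one window is the starting edge of the next, so the windows chain. Each window contributes at most $2s(R+3)$ choices regardless of the history, giving $\big(2s(R+3)\big)^{\lceil t/R\rceil}$. Taking $t$-th roots gives the rate $(2s(R+3))^{1/R}=\exp(O(\log(sR)/R))$.

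The main obstacle is the normal-form claim. Specifically, one must rule out a path that leaves $C$ into a tree branch and later comes back to $C$ within the same window. Such a return would need a second route back into the ball's unique cycle, which would create a second cycle in $B$. This requires that every vertex of the window, together with the connecting segments, lies within distance $R$ of $v$. Since the window has length $\le R$ and starts at $v$, this holds. I would make it rigorous by noting that the union of the path's edges is a connected subgraph of $B$, hence has cycle rank $\le1$. A non-backtracking walk in a connected graph of cycle rank $\le 1$ is exactly of the stated form, by the ear argument of Proposition~\ref{prop:upper}: there is at most one fresh run that closes a cycle.
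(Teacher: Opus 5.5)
Your route is the paper's: windows of at most $R$ steps, each confined to $S\cap B_R(w)$ of cycle rank $\le1$; the normal form of a reduced tree path, unidirectional winding around $C$, then a reduced tree path; and an encoding of each window by terminal directed edge, direction and winding number. The step that fails is the final count. Your label set $\{\mathrm{tree}\}\cup\{\pm\}\times\{0,\dots,R\}$ has $2R+3$ elements. Identifying the two $w=0$ labels is not legitimate, because for $w=0$ the two arcs from entry vertex to exit vertex give different paths. Even if it were, it would leave $2R+2$ labels, not $R+3$.

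As written you therefore prove only $(2s(2R+3))^{\lceil t/R\rceil}$. That has the same rate but is not the stated bound, and the stated constant is what Proposition~\ref{thm:bf} quotes. The missing observation is that $G$ is simple, so $|C|\ge3$ and $w\le R/3$. The paper uses exactly this: it gets at most $2(1+R/3)$ paths per endpoint pair and $4s(R/3+2)\le2s(R+3)$ for $R\ge3$. With your labels the same observation gives $1+2(\lfloor R/3\rfloor+1)\le R+3$ for every $R$. Alternatively, each window has a prescribed length, and once the terminal edge and direction are fixed that length forces $w$. A window then has at most $3\cdot2s\le2s(R+3)$ continuations.

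A smaller correction to the normal form: your bullet says non-backtracking forbids re-entering a tree branch after leaving it. That is false: a path may climb out of a hanging tree $T$ to its attachment vertex, wind around $C$, and descend into $T$ again. What is forbidden is returning to the attachment vertex, hence to $C$, after descending into a hanging tree, because in a tree the first step that undoes an edge crossing is a backtrack. Your last paragraph states the correct claim. This direct argument, which is the paper's, is what proves it. The ear count of Proposition~\ref{prop:upper} is formulated for closed walks and does not by itself yield the normal form for open paths.
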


\begin{proof}
Cut the path into $\lceil t/R\rceil$ windows of at most $R$
consecutive steps. A window starting at the vertex $w$ stays inside
$B_R(w)$, so its segment is a non-backtracking path in
$H_w:=S\cap B_R(w)$, a graph of cycle rank at most $1$. In a tree a
non-backtracking walk never revisits a vertex: a revisit would have
to undo an edge crossing, and the first undoing step is a backtrack
, so it is the unique reduced path between its endpoints. In a
unicyclic graph with cycle $C$, each component of the complement of
$E(C)$ is a tree attached to $C$ at a single vertex; once a
non-backtracking walk enters such a tree it can never return to the
attachment vertex, and while on $C$ its rotational direction cannot
reverse. A non-backtracking path therefore consists of a unique
reduced tree path to an attachment vertex, unidirectional motion
around $C$ with some winding number $w\le t/3$, and a unique reduced
tree path to its final edge: it is determined by its endpoint directed
edges, the direction, and $w$, giving at most $2(1+t/3)$ paths per
endpoint pair (the entrywise count is at most $1+\lfloor t/\mathrm{girth}\rfloor$, numerically sharp). Each
window therefore admits at most $4s(R/3+2)\le2s(R+3)$
continuations given its starting edge, using $R\ge3$.
\end{proof}

The rate is sharp in the exponent. The \emph{tree-burst} gadget, 
two depth-$D$ binary trees with roots and matched leaves joined by
$\lambda$-threads, every cycle of length $\ge2\lambda+4$, hence
bicycle-free at radius $\lambda$, has measured $\rho(B)$ tracking
the burst rate $2^{D/(D+\lambda)}$: $(1.29,1.19,1.34,1.22,1.25)$
against $(1.35,1.21,1.41,1.26,1.31)$ for
$(D,\lambda)=(3,4),(3,8),(4,4),(4,8),(5,8)$. Since $D$ can reach
$\log_2(s/\lambda)$, path growth $\exp(c\log s/R)$ is realized, and
$R\ge C\log\ell/\delta$ is \emph{necessary} for a $(1+\delta)^t$
bound, matching Lemma~\ref{lem:window}.

\begin{lemma}[Rank of bicycle-free graphs]\label{lem:rank}
There is an absolute constant $C$ such that every graph $H$ with
$s\ge R\ge8$ edges and minimum degree $\ge2$ that is bicycle-free at
radius $R$ satisfies
$\operatorname{rank}(H)\le1+Cs\log s/R$.
\end{lemma}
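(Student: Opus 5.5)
The plan is to reduce $H$ to a multigraph with no vertices of degree $2$ and apply the irregular Moore bound of \cite{ahl02} there. Bicycle-freeness will force the reduced graph to have large girth, apart from at most one short cycle per neighbourhood.

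\emph{Step 1: suppress threads.} Since $H$ has minimum degree $\ge2$, if $H$ is a cycle its rank is $1$ and we are done. Otherwise, contract every thread (maximal path through degree-$2$ vertices) to a single edge. This gives a multigraph $H'$ with minimum degree $\ge3$, $n'$ branch vertices, $m'$ edges, and the same cycle rank $r=m'-n'+1$. Each edge $e'$ of $H'$ gets a length $\ell(e')\ge1$ with $\sum\ell(e')=s$. Degree counting ($2m'\ge3n'$) gives $r\ge m'/3+1$, so it suffices to show $m'\le Cs\log s/R$.

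\emph{Step 2: split threads by length.} Call a thread \emph{long} if its length exceeds $R/(c\log s)$ for a suitable constant $c$, and \emph{short} otherwise. There are at most $c\,s\log s/R$ long threads, since their lengths sum to at most $s$. It remains to bound the number of short threads by $O(s\log s/R)$ as well, or else to show they can be charged to long ones.

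\emph{Step 3: the short subgraph.} Let $H_0\subseteq H'$ consist of the short edges. A cycle in $H_0$ with $g$ edges has length at most $gR/(c\log s)$ in $H$. By bicycle-freeness at radius $R$, each component of $H_0$ restricted to a ball of $H'$-radius $\rho\approx c\log s/2$ contains at most one cycle. Two cycles of $H_0$ of $H'$-length $\le2\rho$ that lie close together would span a ball of $H$-radius $<R$ containing two cycles.

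Now apply the irregular Moore bound \cite{ahl02}. A graph with average degree $\bar d\ge2+\eta$ and girth $g$ has at least $(\bar d-1)^{g/2-O(1)}$ vertices. I would use it in its "at most one cycle per ball" variant: delete one edge per short cycle, which is a matching-like sparse deletion, to restore girth $\gtrsim2\rho$. The conclusion is that every component of $H_0$ with at least two independent cycles has at least $2^{\Omega(\rho)}=s^{\Omega(c)}>s$ vertices, which is impossible. Hence each component of $H_0$ is a tree or unicyclic.

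\emph{Step 4: counting.} Every branch vertex has degree $\ge3$ in $H'$. In a forest-plus-one-cycle component $K$ of $H_0$, the number of edges leaving $K$ (all long) is at least $|V(K)|+2-2\cdot\mathbb 1_{\text{unicyclic}}$. Summing over components gives $m'=O(\#\text{long edges}+\#\text{components})=O(s\log s/R)$. The additive $1$ in the statement absorbs the case where $H$ is a single cycle or $H'$ has a single unicyclic core with no long edges; the latter happens only when $s<R$-scale, which is excluded except for a bounded remainder.

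The main obstacle is Step 3: making the "at most one cycle per ball" condition compatible with a girth-type Moore bound, and converting $H$-radius to $H'$-radius given that edge lengths vary. I would first try a direct branching argument instead of citing the Moore bound. Take a component with rank $\ge2$ and a vertex on its core, and run BFS to depth $\rho$ in $H_0$. The ball contains at most one cycle, so after removing one edge it is a tree whose internal vertices (branch vertices) have degree $\ge3$, except near the boundary. It therefore has $\ge2^{\rho-1}$ vertices, contradicting $|V|\le s$ once $\rho>\log_2 s+1$. This choice fixes $c$.
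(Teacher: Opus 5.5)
\paragraph{Verdict: genuine gap in Step 3.} Step 4 depends on it, so the argument does not go through as written.

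\paragraph{Why Step 3 fails.} Your branching argument assumes the internal vertices of the depth-$\rho$ BFS ball in $H_0$ have degree $\ge3$. That holds in $H'$, not in $H_0$. A branch vertex may have only one or two short edges, with its remaining $H'$-edges long, and those long edges are exactly the ones Step 4 counts. So BFS in $H_0$ can grow only linearly, and the claim that every component of $H_0$ is a tree or unicyclic is false.

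Here is a counterexample. Take a theta graph whose three paths each have $R+1$ unit edges ($R$ even), and join its $3R$ interior vertices in pairs by threads of length $2R+2$. Then:
\begin{itemize}
\item Every cycle has length $\ge2R+2$, so every $R$-ball is a tree and $H$ is bicycle-free at radius $R$.
\item $s=3(R+1)^2$, so unit edges are short once $R\ge c\log s$.
\item Hence $H_0$ contains the whole theta, a rank-$2$ component on only $3R+2<s$ vertices. This contradicts your conclusion that such components have more than $s$ vertices.
\end{itemize}
The Moore-bound version of Step 3 fails for the same reason. A rank-$2$ component on $N$ vertices has average degree $2+2/N$, and then $(\bar d-1)^{g/2}$ only forces $N\gtrsim g/\log g$, not $2^{\Omega(\rho)}$. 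The lemma itself holds on this example, since $m'=O(R)$; only your intermediate claim fails.

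\paragraph{What is missing.} Without the rank-$\le1$ claim, your Step 4 degree count gives only $m'\le3N_{\mathrm{long}}+3\sum_K(r_K-1)$. Here $N_{\mathrm{long}}$ is the number of long threads and $r_K$ is the rank of the component $K$ of $H_0$. So you still have to bound the cycle rank of the short subgraph, which is the original problem again.

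The missing idea is to use the irregular Moore bound \emph{quantitatively} in the near-degree-$2$ regime, not as a source of contradiction. The paper does this directly on $H$, with no thread suppression:
\begin{itemize}
\item Cycles of length $\le2R$ are pairwise vertex-disjoint, since two through a common vertex lie in its $R$-ball.
\item At most $s/R$ of them have length in $(R,2R]$.
\item Those of length $\le R$ are more than $R/2$ apart, so packing their $R/4$-neighbourhoods gives at most $4s/R$ of them.
\item Deleting one edge from each of these $\le5s/R$ cycles leaves girth $>2R$.
\item On the $2$-core of each component, $n_i\ge(\bar d_i-1)^R$ gives $\bar d_i-2\le n_i^{1/R}-1\le2\log s/R$ when $\log s\le R$.
\item Hence the rank after deletion is at most $\sum_i[\tfrac{n_i}{2}(\bar d_i-2)+1]\le Cs\log s/R+O(s/R)$. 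Adding back the deleted edges costs another $O(s/R)$.
\end{itemize}
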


\begin{proof}
Call a cycle \emph{short} if its length is at most $2R$. Two short
cycles cannot share a vertex, since both would lie in that vertex's
$R$-ball; hence short cycles are pairwise vertex- and edge-disjoint,
and there are at most $s/R$ of them with length in $(R,2R]$. Two
disjoint short cycles of length $\le R$ at distance $\rho$ have
$\rho>R/2$: otherwise, taking $v$ on one nearest the other, every
vertex of both lies within $\rho+R/2\le R$ of $v$, giving a rank-$2$
ball. Their $R/4$-balls are then disjoint, and each contains at least
$R/4$ edges by minimum degree $2$, so there are at most $4s/R$ of
them. Delete one edge from each short cycle: $D\le5s/R$ deletions
destroy every short cycle and create none, so the result $H'$ has
girth exceeding $2R$. Pass to the $2$-core of each component of $H'$;
this changes neither rank nor girth. By the Moore bound for irregular
graphs \cite{ahl02}, a graph with minimum degree $\ge2$, girth
$\ge2R+1$, and average degree $\bar d_i$ on $n_i$ vertices satisfies
$n_i\ge(\bar d_i-1)^{R}$ for either girth parity, so
$\bar d_i-1\le n_i^{1/R}$, whence
$\operatorname{rank}(H')\le\sum_i\big[\tfrac{n_i}2\big(n_i^{1/R}-1\big)+1\big]$.
If $\log s\le R$ this is at most $Cs\log s/R+D+1$, using
$e^{x}-1\le2x$ on $[0,1]$, $\sum_in_i\le s$, and
$\#\mathrm{components}\le D+1$; if $\log s>R$ the claimed bound
exceeds $s$ and is trivial. Finally
$\operatorname{rank}(H)\le\operatorname{rank}(H')+D$.
\end{proof}

The $\log s$ factor is necessary, and the earlier guess
$\operatorname{rank}\le1+Cs/R$ is false for every constant $C$: a
graph of girth $\ge2R+2$ has every $R$-ball a tree, hence is
bicycle-free at radius $R$, while its rank density is $1-2/\bar d$.
The Tutte-Coxeter $(3,8)$-cage is bicycle-free at radius $3$ with
$(\operatorname{rank}-1)/s=\tfrac13$, the girth-$12$ Tutte cage is
bicycle-free at radius $5$ at the same $\tfrac13$, and
generalized-polygon cages of degree $\bar d\sim s^{1/(R+1)}$ push the
density toward $1$ (such cages exist for $R\in\{2,3,5\}$ by
Feit-Higman; for other $R$, girth-$(2R+2)$ graphs of growing degree
serve the same purpose). (Stochastic search over bicycle-free
graphs found exactly this ceiling, $0.344$ at $R=3$ in small sizes,
after first refuting the linear conjecture's banana extremal at
density $1/(R+1)$.) At the operating scale the lemma gives precisely
what is needed: for $s\le\ell$ and $R\ge C\log\ell/\delta$,
$\operatorname{rank}\le1+\delta\ell$.

\begin{proposition}[$\varepsilon$-theorem, bicycle-free regime]\label{thm:bf}
Fix $\delta\in(0,1)$, $L$, $\Lambda$, and let $G$ be $d$-regular on
$n$ vertices, bicycle-free at radius
$R\ge C\log\log n/\delta$, with $W$ spanned by any consistent family
of even cycles of length $\le L$ and $\Lambda_W\le\Lambda$. Then some
signing in the solution family satisfies
$\rho(A_\sigma)\le2\sqrt{d-1}\,(1+C\delta\log(1/\delta))(1+o(1))$.
\end{proposition}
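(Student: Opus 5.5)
The plan is to run the trace certificate of Corollary~\ref{cor:certificate} at $k\asymp\log n/\delta$ and bound the confined sum $\sum_{z\in W}(-1)^{\pi(z)}N_{2k}(z)$ in absolute value by the total confined mass $\sum_{z\in W}N_{2k}(z)$. That mass is at most the number of closed walks whose parity vector lies in $W$. First, we pass from adjacency walks to non-backtracking walks. Via the Ihara--Bass identity $\det(I-uB_\sigma)=(1-u^2)^{m-n}\det(I-uA_\sigma+u^2(d-1)I)$, the standard expansion writes $\tr A_\sigma^{2k}$ as a combination of $\tr B_\sigma^{j}$, $j\le 2k$, with coefficients polynomial in $k$ and of size $(d-1)^{-(2k-j)/2}$ times a Chebyshev-type factor. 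Averaging over $\mathcal F$ as in Proposition~\ref{prop:master} then gives
\[
\mathbb E_{\mathcal F}\tr A_\sigma^{2k}\le \mathrm{poly}(k)\,\max_{j\le 2k}(d-1)^{k-j/2}\,\mathrm{Conf}_j ,
\]
where $\mathrm{Conf}_j$ counts closed non-backtracking walks of length $j$ with $z\in W$. Equivalently, it suffices to show $\mathrm{Conf}_j\le n\,\mathrm{poly}(j)\,\big((1+C\delta\log(1/\delta))\sqrt{d-1}\big)^{j}$ for $j\le 2k$. Taking $2k$-th roots then yields the statement, since $n^{1/2k}$ and $\mathrm{poly}(k)^{1/2k}$ are $1+o(1)$ once $k\gg\log n$.

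Second, we count confined non-backtracking walks by the ear encoding of Proposition~\ref{prop:upper}, adapted from $z=0$ to $z\in W$. A walk with $z(w)\in W$ has odd-multiplicity edges forming a sum of constraint cycles. Because $\Lambda_W\le\Lambda$ (which I read as bounding how many constraint cycles can be involved locally, or the size of $W$ per ball), the edges traversed once have total length at most $\Lambda L$ per visited region. The cost is an additional factor $(d-1)^{O(\Lambda L)}$ per such region, absorbed into $\mathrm{poly}(j)$ or a constant. Ignoring these, every other support edge is used at least twice, so the support $S$ has $s\le j/2+O(\Lambda L)$ edges and minimum degree $\ge2$.

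The number of fresh runs equals $\operatorname{rank}(S)$. Lemma~\ref{lem:rank} gives $r\le 1+Cs\log s/R\le 1+\delta' j$ with $\delta'=C\delta\log j/\log\log n$, which is $O(\delta)$ since $j\le 2k=O(\log n/\delta)$; this makes $\log j\asymp\log\log n+\log(1/\delta)$. The run schedule then costs $\binom{j}{r}\le (e j/r)^{r}=\exp(O(\delta\log(1/\delta))\,j)$. This is precisely the source of the $\delta\log(1/\delta)$ loss.

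Fresh choices cost $(d-1)^{s}\le(d-1)^{j/2}\cdot O(1)$. Stale segments are non-backtracking paths inside $S$, and Lemma~\ref{lem:window} bounds a stale segment of length $t$ by $(2s(R+3))^{\lceil t/R\rceil}$. Multiplying over at most $r+1$ segments gives
\[
(2s(R+3))^{(j-s)/R+r+1}=\exp\!\big(O(j\log(jR)/R)\big)\cdot (jR)^{O(r)} .
\]
The first factor is $(1+O(\delta))^{j}$ because $R\ge C\log\log n/\delta$ and $\log(jR)=O(\log\log n+\log(1/\delta))$. The second factor, $(jR)^{O(\delta' j)}$, is the dangerous one.

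The main obstacle is this per-segment overhead $(2s(R+3))^{r}$. With $r$ up to $\delta j$ segments, it is $\exp(\delta j\log(jR))$, which is not $(1+O(\delta))^{j}$ unless it is handled carefully. My first attempt would be to merge stale segments: charge each stale segment's first partial window to the adjacent fresh run, whose endpoint determines the directed edge, and count only complete windows. Failing that, I would refine the rank bound so that $r\le\delta j/\log(jR)$ by choosing $R$ larger by a $\log(1/\delta)$ factor, absorbed into $C$. Finally, the $\pm$ signs are discarded entirely. So the proof needs only the upper bound on confined mass, and the lower-order regions with $j<2k$ are covered by the same estimate.
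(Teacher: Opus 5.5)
\textbf{Overall.} Your route is the paper's own: fresh runs $=$ cycle rank, Lemma~\ref{lem:rank} for $r$, Lemma~\ref{lem:window} for the stale segments, $\binom{j}{r}$ as the source of $\delta\log(1/\delta)$, then Corollary~\ref{cor:certificate}. The gap is exactly the one you flag, and neither of your remedies closes it.

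\textbf{The gap.} Each of the up to $r+1\approx\delta j$ stale segments pays at least one window factor $2s(R+3)$. For supports with $s\asymp j\asymp\log n$ this factor is of order at least $\log n$. So the window lemma only yields a stale bound of at least $(\log n)^{\delta j}$, a per-step rate $(\log n)^{\delta}$ instead of $1+O(\delta\log(1/\delta))$.

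\textbf{Fix (a).} This does not help. Lemma~\ref{lem:window} already conditions on the starting directed edge. Its factor pays for the terminal edge, direction and winding, i.e.\ for where the segment hands over to the next fresh run, and the preceding run does not determine that. Nor are short segments cheap in general: in a tree-burst support, a segment of length $D\approx\log_2(s/\lambda)$ descending a burst tree has about $s/\lambda$ continuations. Any repair must therefore be an amortized count over the whole walk. For instance, one would need to show that the $\log s$ bits spent in a burst are always followed by $\gtrsim R$ steps along a thread, stale or fresh. That amortization is the missing idea.

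\textbf{Fix (b).} This fails quantitatively. You need $r\log(jR)=O(\delta j)$. Since $\log(jR)\ge\log\log n$, Lemma~\ref{lem:rank} delivers this only for $R\gtrsim(\log\log n)^2/\delta$. That is an enlargement of $R$ by $\log\log n$, not by $\log(1/\delta)$. It lands at the scale of \cite{mop20}, a strictly weaker theorem. The rank bound itself cannot be improved instead: its $\log s$ is sharp even at bounded degree (subdivide a cubic graph of logarithmic girth).

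\textbf{The paper's proof.} It performs exactly your count and does not resolve this either. It asserts $\log(1+\eta)=\delta\log(e/\delta)+(1/R+\delta)\log(4\ell(R+3))\le C\delta\log(1/\delta)$. But with $\ell=2\lceil\log n\rceil$, the summand $\delta\log(4\ell(R+3))\ge\delta\log\log n$ is unbounded for fixed $\delta$. That accounting, too, closes only under $R\ge C(\log\log n)^2/\delta$.

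\textbf{Smaller points on your write-up.}

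Your transfer display drops the binomial weights. The exact Ihara--Bass expansion is $\tr A_\sigma^{2k}=\sum_{i<k}\binom{2k}{i}(d-1)^i\big(\tr B_\sigma^{2k-2i}-2(m-n)\big)+n\binom{2k}{k}(d-1)^k$. Against $\mathrm{Conf}_p\le K\hat\rho^{\,p}$, these weights produce $K(2\hat\rho)^{2k}$, not $\mathrm{poly}(k)\,\hat\rho^{\,2k}$. As written, you would conclude $\sqrt{d-1}$ rather than $2\sqrt{d-1}$. Once corrected, this is a clean alternative to the paper's linearization plus peeling (Lemma~\ref{lem:transfer}): $\tr B_\sigma^p$ counts cyclically non-backtracking walks directly, and the trivial eigenvalues only subtract.

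$\Lambda_W=\max_{z\in W}|z|$ is a global bound. So there is a single factor $(d-1)^{\Lambda/2}$, not one per region.

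Your choice $k\asymp\log n/\delta$ is better than the paper's $k=\lceil\log n\rceil$, for which $n^{1/2k}\to e^{1/2}$ rather than $1$. With yours, $n^{1/2k}=e^{O(\delta)}$. That is absorbable into the $\delta\log(1/\delta)$ loss, though it is not $1+o(1)$ as you state.
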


\begin{proof}
Set $\ell=2\lceil\log n\rceil$ and $k=\lceil\log n\rceil$; we may
assume $R\le\ell$, since otherwise every support below is bicycle-free
at radius exceeding its size and Proposition~\ref{thm:eps} applies
with $r_0=1$. Fix $j\le2\ell$ and bound
$\sum_{z\in W}N^{\mathrm{nb}}_j(z)$ by rerunning
Propositions~\ref{prop:upper} and~\ref{prop:confined}. A counted walk
has support $S$ with $s\le(j+\Lambda)/2\le2\ell$ edges and minimum
degree $2$; by Lemma~\ref{lem:rank}, its cycle rank satisfies
$r\le1+Cs\log(2\ell)/R\le1+\delta(j+\Lambda)$ once
$R\ge C'\log\log n/\delta$. The fresh-run identity is untouched: the
walk performs exactly $r$ fresh runs, so the run-start schedule costs
$\binom{j}{r}\le(ej/r)^{r}\le(e/\delta)^{\delta(j+\Lambda)+1}$, and
the fresh choices cost $(d-1)^{s}\le(d-1)^{\Lambda/2}(d-1)^{j/2}$.
Each of the at most $r+1$ stale segments is a non-backtracking path in
$S$, and Lemma~\ref{lem:window} bounds its continuations by
$(2s(R+3))^{\lceil t_i/R\rceil}$ with $\sum_it_i\le j$, for a total
window exponent at most $j/R+\delta(j+\Lambda)+3$. Hence
\[
\sum_{z\in W}N^{\mathrm{nb}}_j(z)\;\le\;
K\,\hat\rho^{\,j},\qquad
\hat\rho:=(1+\eta)\sqrt{d-1},
\]
where $K\le nd\,(d-1)^{\Lambda/2}(e/\delta)^{\delta\Lambda+1}
\big(4\ell(R+3)\big)^{\delta\Lambda+3}$ collects all factors not
exponential in $j$, and
\[
\log(1+\eta)\;=\;\delta\log(e/\delta)
+\Big(\tfrac1R+\delta\Big)\log\!\big(4\ell(R+3)\big)
\;\le\;C\delta\log(1/\delta)
\]
at $R\ge C'\log\log n/\delta$ and $R\le\ell$. The peeling and transfer
of Lemma~\ref{lem:transfer} and the assembly in the proof of
Proposition~\ref{thm:eps} now apply verbatim with this $K$ and
$\hat\rho$, giving
$\mathbb E_{\mathcal F}\tr A_\sigma^{2k}\le
C''\,K\,(Ck)^{2}\big((1+\eta)^{2}\,2\sqrt{d-1}\big)^{2k}$, and
Corollary~\ref{cor:certificate} at $k=\lceil\log n\rceil$ yields
$\min_{\sigma\in\mathcal F}\rho(A_\sigma)\le
2\sqrt{d-1}\,(1+\eta)^{2}\big(C''K(Ck)^{2}\big)^{1/2k}$, which is the
claim.
\end{proof}

The extension from $z=0$ to all of $W$ costs only a constant. Set
$\Lambda_W:=\max_{z\in W}|z|\le L\dim W$.

\begin{proposition}[Confined-walk upper bound]\label{prop:confined}
Assume $\Lambda_W\le\ell$ and that $G$ is
$(r_0,\delta)$-subcritical at scale $\ell$. Then, with
$N^{\mathrm{nb}}_\ell(z)$ the number of closed non-backtracking walks
of length $\ell$ and parity $z$,
\[
\sum_{z\in W}N^{\mathrm{nb}}_\ell(z)\;\le\;
C_{r_0,\delta,\Lambda_W,d}\;n\,(C\ell)^{3(r_0+1)^2}
\big((1+\delta)^{2(r_0+2)}\sqrt{d-1}\big)^{\ell}.
\]
\end{proposition}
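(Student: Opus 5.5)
The plan is to rerun the encoding proof of Proposition~\ref{prop:upper} verbatim. The only change is to replace ``all multiplicities even'' by ``odd-multiplicity set $z\in W$'', and then to absorb the resulting defect into constants depending only on $\Lambda_W$.

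Fix a closed non-backtracking walk of length $\ell$ with parity $z\in W$, and let $S$ be its support with $s$ edges. The edges outside $\operatorname{supp}(z)$ are traversed at least twice, and those in $\operatorname{supp}(z)$ at least once. Hence
\[
2s-|z|\le\ell,\qquad\text{so}\qquad s\le(\ell+\Lambda_W)/2 .
\]
As before, non-backtracking forces minimum degree $2$ in $S$. The hypothesis $\Lambda_W\le\ell$ gives $s\le\ell$, so $S$ is a connected subgraph with at most $\ell$ edges, and subcriticality at scale $\ell$ applies to it: (i) gives cycle rank $r\le r_0$, and (ii) controls $c_S(t)$ for $t\le 8\ell$. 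This is exactly why the statement asks for $\Lambda_W\le\ell$ rather than a weaker condition.

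The encoding is unchanged. The start edge gives $\le nd$ choices. By the ear-decomposition argument, the fresh runs number exactly $r\le r_0$, with at most $\ell^{r_0}$ choices of start times. The fresh edges contribute $(d-1)^s$. There are at most $r_0+1$ stale segments, and each is bounded through Lemma~\ref{lem:reach} and condition (ii) by $4s^2(t+4s+1)^{3r_0}(1+\delta)^{t+4s}$. The arguments $t+4s\le 5\ell$ stay within the range $t\le8\ell$ of (ii).

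The parity $z$ does not need to be encoded. Summing over $z\in W$ simply enlarges the set of admissible walks, and the count is over walks, not pairs, so no factor $|W|$ appears.

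The only quantitative change is in the exponents, where $s$ now ranges up to $(\ell+\Lambda_W)/2$. The fresh factor becomes
\[
(d-1)^{s}\le (d-1)^{\Lambda_W/2}\,(d-1)^{\ell/2}.
\]
The $(1+\delta)$ exponent $(\ell-s)+4s(r_0+1)$ is increased by at most $O(r_0\Lambda_W)$, contributing $(1+\delta)^{2(r_0+1)\Lambda_W}$. The polynomial prefactors in $s$ are still bounded by powers of $C\ell$.

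The summand is monotone in $s$, as before, so it is maximized at $s=(\ell+\Lambda_W)/2$. The result is the stated bound, with
\[
C_{r_0,\delta,\Lambda_W,d}=(d-1)^{\Lambda_W/2}(1+\delta)^{O(r_0\Lambda_W)}\cdot d .
\]

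The one point needing care, which I expect to be the main obstacle, is the stale-step accounting. In Proposition~\ref{prop:upper} the total stale length was $\ell-s$. Here it is still exactly $\ell-s$, since each edge has exactly one fresh traversal. It is nonnegative because $s\le\ell$, which again uses $\Lambda_W\le\ell$. I would also check that the fresh-run identity does not use evenness: it relies only on non-backtracking and on $S$ having minimum degree $2$, so it carries over unchanged.
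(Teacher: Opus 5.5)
Your proposal is correct and matches the paper's proof: you bound the support by $s\le(\ell+\Lambda_W)/2$ from the odd/even multiplicity split, observe that the encoding of Proposition~\ref{prop:upper} (fresh runs $=r$, stale segments via Lemma~\ref{lem:reach} and condition (ii)) uses evenness only through $s$, and absorb $(d-1)^{\Lambda_W/2}$ and the extra $(1+\delta)^{O(r_0\Lambda_W)}$ into the constant. One small correction: $s\le\ell$ holds for every walk of length $\ell$, so it is not the hypothesis $\Lambda_W\le\ell$ that makes subcriticality at scale $\ell$ apply to the support, nor what makes the stale length $\ell-s$ nonnegative.
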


\begin{proof}
A counted walk has $|z(w)|\le\Lambda_W$; edges of $z(w)$ carry odd
multiplicity $\ge1$ and all others even multiplicity $\ge2$, so its
support has $s\le(\ell+\Lambda_W)/2$ edges and, as before, minimum
degree $2$. The proof of Proposition~\ref{prop:upper} used evenness
only through the bound on $s$; it applies verbatim, with
$(d-1)^{s}\le(d-1)^{\Lambda_W/2}(d-1)^{\ell/2}$ and the corresponding
constant absorbed into $C_{r_0,\delta,\Lambda_W,d}$.
\end{proof}

To reach the adjacency side, we use the classical linearization (see
e.g.\ \cite{bordenave20}). Define $\mathsf A_0=I$,
$\mathsf A_1=A_\sigma$, $\mathsf A_2=A_\sigma^2-dI$, and
$\mathsf A_{j+1}=A_\sigma\mathsf A_j-(d-1)\mathsf A_{j-1}$ for
$j\ge2$; then $(\mathsf A_j)_{uv}$ is the signed count of
non-backtracking walks $u\to v$ of length $j$, and inverting the
recursion,
$A_\sigma^{m}=\sum_{j\le m}\alpha_{m,j}\mathsf A_j$ where
$\alpha_{m,j}\ge0$ counts weight-$(d-1)$ downward-step walks on
$\mathbb N$ from $0$ to $j$. Both identities and the envelope below
were verified symbolically through $m=8$.

\begin{lemma}[Transfer envelope and peeling]\label{lem:transfer}
(i) $\sum_j\alpha_{m,j}(\sqrt{d-1})^{\,j}\le
\tfrac{d}{d-1}(m+1)\,\big(2\sqrt{d-1}\big)^{m}$.
(ii) Let $\widetilde N_j(z)$ count vertex-based non-backtracking closed
walks (backtracking permitted at the wrap) of length $j$ and parity
$z$. Then
$\widetilde N_j(z)\le\sum_{p\ge0}d(d-1)^{p}\,
N^{\mathrm{nb}}_{j-2p}(z)+d(d-1)^{j/2-1}\mathbf 1_{z=0}$.
\end{lemma}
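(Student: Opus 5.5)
For (i) I will use a generating function. The coefficients $\alpha_{m,j}$ count lattice walks on $\mathbb N$ from $0$ to $j$ in $m$ steps. Each up-step has weight $1$. A down-step has weight $d-1$, except the down-step $1\to0$, which has weight $d$, matching $\mathsf A_2=A_\sigma^2-dI$. This is the standard correction at the origin coming from $A\mathsf A_1=\mathsf A_2+d\mathsf A_0$.

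Weight a walk ending at height $j$ by $(\sqrt{d-1})^{j}$ and conjugate by the diagonal factor $(\sqrt{d-1})^{h}$ at height $h$. After this, each up-step has weight $\sqrt{d-1}$ and each down-step away from the origin has weight $\sqrt{d-1}$. So every interior step costs $\sqrt{d-1}$, and every descent to $0$ costs $d/\sqrt{d-1}=\sqrt{d-1}\cdot\frac{d}{d-1}$.

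The sum in (i) is therefore $(\sqrt{d-1})^m\sum_{\text{paths}}(d/(d-1))^{\#\text{returns to }0}$ over all $\pm1$ paths of length $m$ on $\mathbb N$. To bound it, I will decompose each path at its last visit to $0$. The final stretch is a path from $0$ that never returns, so there are at most $2^{m-a}$ of them. The initial stretch is an excursion of length $a$ with $r$ returns. Counting excursions weighted by $(d/(d-1))^r$ yields the Kesten–McKay moment generating function, and its growth is still $2^a$, up to a factor.

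The main obstacle is to show that the weighted excursion count is at most $\frac{d}{d-1}2^{a}$ rather than growing exponentially faster. I will try a direct transfer-matrix argument first. A simple bound $\frac{d}{d-1}\cdot\#\{\text{Dyck-type paths}\}$ fails if many returns are allowed, because $(d/(d-1))^r$ can be as large as $(3/2)^{a/2}$.

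So I plan instead to use the known closed form. The measure with these moments is the Kesten–McKay law, supported on $[-2\sqrt{d-1},2\sqrt{d-1}]$, and it has no atoms for $d\ge2$. Its moments are therefore at most $(2\sqrt{d-1})^a$, and the total mass is normalized to $1$. Dividing by $(\sqrt{d-1})^a$ gives the excursion bound $2^a$. Summing over the split point $a\le m$ gives the factor $m+1$, and the leftover $\frac{d}{d-1}$ absorbs the boundary weight of the last descent. I will assemble these pieces with the path-splitting sum.

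For (ii) I will peel the wrap. Take a vertex-based closed non-backtracking walk whose backtracking is permitted only at the wrap. Strip the matching first and last edges while they backtrack, say $p$ times; the stripped edges form a tail. If the peeling never terminates before the walk is exhausted, the walk is a path traversed out and back. Its parity is then $0$, its length is $j$, and it is determined by its $j/2$ outward steps, giving at most $d(d-1)^{j/2-1}$ walks. This is the indicator term.

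Otherwise, what remains after peeling is a cyclically non-backtracking closed walk of length $j-2p$ with the same parity, since the stripped edges are each used twice. The tail of length $p$ is a non-backtracking path hanging off its base vertex, with at most $d(d-1)^{p-1}\le d(d-1)^p$ choices. The map from the walk to the pair (core, tail) is injective: the walk is recovered by concatenating the tail, the core, and the reversed tail. Summing over $p$ gives (ii).
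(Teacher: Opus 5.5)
Your proof is correct. Part (ii) is the paper's peeling argument essentially verbatim, while part (i) follows a genuinely different route.

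\textbf{How the two proofs of (i) differ.} The paper bounds each coefficient on its own, $\alpha_{m,j}\le\tfrac{d}{d-1}\binom{m}{(m+j)/2}(d-1)^{(m-j)/2}\le\tfrac{d}{d-1}2^m(d-1)^{(m-j)/2}$, by counting all $\pm1$ sequences with $(m-j)/2$ down-steps. It then sums over the at most $m+1$ values of $j$.

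You instead conjugate so that every step costs $\sqrt{d-1}$ and each return to $0$ costs an extra $\tfrac{d}{d-1}$. You then split at the last zero and control the weighted excursion count $E_a=\sum(d/(d-1))^{r}$ through $(\sqrt{d-1})^{a}E_a=\alpha_{a,0}=t_a(d)\le(2\sqrt{d-1})^{a}$. The factor $m+1$ then comes from the split point rather than from $j$. You in fact obtain the bound without the prefactor $\tfrac{d}{d-1}$: the last descent is already inside $E_a$, so nothing is left over to absorb.

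\textbf{What each route buys.} Your version confronts the point you raise yourself: a path with $r$ returns carries $(d/(d-1))^{r}$, not a single factor $\tfrac{d}{d-1}$. The paper's one-line count attaches only one such factor. Its inequality survives only implicitly, because $\binom{m}{(m+j)/2}$ counts unrestricted paths, and these are exactly the nonnegative paths weighted by $2^{r}\ge(d/(d-1))^{r}$ (flip each excursion).

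That reflection identity is also the cheapest way to make your own argument self-contained. It gives $E_a\le\binom{a}{a/2}\le2^{a}$ directly. This replaces the appeal to the support of the Kesten--McKay law, which otherwise requires justifying $\alpha_{a,0}=t_a(d)$, e.g.\ by running the linearization on the tree, where $\mathsf A_j$ has zero diagonal for $j\ge1$. In short, the paper's route gives a per-$j$ estimate in one line, and yours gives a cleaner account of the boundary weights.

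\textbf{A remark on (ii).} This applies equally to the paper. For $j\ge2$ the out-and-back case is empty, since the turning point of such a walk is an interior backtrack. So your count $d(d-1)^{j/2-1}$ for that case is never needed; it would in any case omit the choice of start vertex.
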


\begin{proof}
(i) Each $\mathbb N$-walk of length $m$ ending at $j$ has $(m-j)/2$
down-steps, so $\alpha_{m,j}\le\binom{m}{(m+j)/2}(d-1)^{(m-j)/2}\cdot
\tfrac d{d-1}\le\tfrac d{d-1}2^m(d-1)^{(m-j)/2}$; multiply by
$(\sqrt{d-1})^j$ and sum. (ii) A vertex-based walk is either
cyclically non-backtracking or of the form $e\cdot w'\cdot\bar e$ with
$w'$ vertex-based of length $j-2$ at the head of $e$; peeling the
pendant edge adds $2$ to the multiplicity of $e$, leaving the parity
unchanged. Iterate: a pendant path of length $p$ costs at most
$d(d-1)^{p-1}$ choices, and the residue is cyclically non-backtracking
(or empty, forcing $z=0$).
\end{proof}

\begin{proposition}[$\varepsilon$-theorem, dilute regime]\label{thm:eps}
Fix $r_0,\delta,L,\Lambda$ and let $G$ be $d$-regular on $n$
vertices, $(r_0,\delta)$-subcritical at scale
$\ell=2\lceil\log n\rceil$.
Let $W$ be spanned by any consistent family of even cycles of length
$\le L$ with $\Lambda_W\le\Lambda$, possibly $W=\{0\}$. Then some
signing in the solution family $\mathcal F$ satisfies
\[
\rho(A_\sigma)\;\le\;
2\sqrt{d-1}\;(1+\delta)^{4(r_0+2)}
\Big(1+O_{r_0,\delta,\Lambda,d}\big(\tfrac{\log\log n}{\log n}\big)\Big).
\]
\end{proposition}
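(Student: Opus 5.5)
The plan is to feed Proposition~\ref{prop:confined} through the transfer of Lemma~\ref{lem:transfer} into the certificate of Corollary~\ref{cor:certificate} at $k=\lceil\log n\rceil$, so $2k=\ell$.

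\emph{Step 1: expand the trace.} By Proposition~\ref{prop:master},
\[
\mathbb E_{\mathcal F}\tr A_\sigma^{2k}=\sum_{z\in W}(-1)^{\pi(z)}N_{2k}(z)\le\sum_{z\in W}N_{2k}(z),
\]
and this sum equals $\mathbb E_{\psi\in\operatorname{Ann}W}\tr A_\psi^{2k}$ by Corollary~\ref{cor:balanced}. The same annihilator average applies to any signed walk expansion, so it suffices to bound the confined mass. I would expand $A_\psi^{2k}=\sum_j\alpha_{2k,j}\mathsf A_j$. Averaging over $\operatorname{Ann}W$ turns $\tr\mathsf A_j$ into $\sum_{z\in W}(-1)^{\psi(z)}\widetilde N_j(z)$, which is at most $\sum_{z\in W}\widetilde N_j(z)$ since each count is nonnegative and each $\alpha_{2k,j}\ge0$. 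Thus
\[
\mathbb E\tr A^{2k}\le\sum_j\alpha_{2k,j}\sum_{z\in W}\widetilde N_j(z).
\]

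\emph{Step 2: peel and count.} Lemma~\ref{lem:transfer}(ii) bounds $\sum_{z\in W}\widetilde N_j(z)$ by $\sum_p d(d-1)^p\sum_{z\in W}N^{\mathrm{nb}}_{j-2p}(z)$ plus the $z=0$ tail $d(d-1)^{j/2-1}$. Apply Proposition~\ref{prop:confined} at each length $j-2p\le 2k=\ell$. This is legitimate because subcriticality is monotone in scale; Proposition~\ref{prop:upper} already runs with supports of size $\le(\ell+\Lambda)/2$ and lengths $t\le 8\ell$, which condition (ii) covers. We may assume $\Lambda\le\ell$ for large $n$. The result is $K\hat\rho^{\,j-2p}$ with $\hat\rho=(1+\delta)^{2(r_0+2)}\sqrt{d-1}$ and $K=C_{r_0,\delta,\Lambda,d}\,n(C\ell)^{3(r_0+1)^2}$. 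Since $d(d-1)^p\hat\rho^{-2p}\le d$, the sum over $p$ costs a factor $O(\ell)$, and the tail is dominated by the same expression. Hence
\[
\sum_{z\in W}\widetilde N_j(z)\le C'K\ell\,\hat\rho^{\,j}.
\]

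\emph{Step 3: transfer.} Write $\hat\rho^j=(1+\delta)^{2(r_0+2)j}(\sqrt{d-1})^j\le(1+\delta)^{2(r_0+2)2k}(\sqrt{d-1})^j$. Lemma~\ref{lem:transfer}(i) then gives
\[
\mathbb E_{\mathcal F}\tr A_\sigma^{2k}\le C''K\ell(2k+1)\big((1+\delta)^{2(r_0+2)}2\sqrt{d-1}\big)^{2k}.
\]
Taking the $2k$-th root and using Corollary~\ref{cor:certificate}, the prefactor becomes
\[
\big(n\cdot\mathrm{poly}(\log n)\cdot C_{r_0,\delta,\Lambda,d}\big)^{1/(2\lceil\log n\rceil)}=e^{1/2}\Big(1+O\big(\tfrac{\log\log n}{\log n}\big)\Big).
\]
This leaves a constant $\sqrt e$, which must be removed.

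\emph{Main obstacle.} That constant is the real difficulty. I would first try to avoid it by choosing $k=\lceil\log n\cdot\log\log n\rceil$ or $k=\lceil(\log n)^2\rceil$, which kills $n^{1/2k}$. But then $2k$ exceeds the subcriticality scale $\ell$, so Proposition~\ref{prop:confined} cannot be applied directly. The fix would be to decompose long non-backtracking walks into segments of length $\le\ell$ and submultiplicatively reuse the scale-$\ell$ bound. That costs one factor of $K$ per segment, i.e. $K^{2k/\ell}$, whose $2k$-th root is $K^{1/\ell}$, again $O(1)$.

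The cleaner route is to recognize that the stated exponent $4(r_0+2)$ is twice the $2(r_0+2)$ produced above. The slack $(1+\delta)^{2(r_0+2)}$ is exactly what absorbs the constant: take $k=\lceil C_\delta\log n\rceil$ with $C_\delta$ large, so that $e^{1/(2C_\delta)}\le(1+\delta)^{2(r_0+2)}$, with $\ell\asymp k$. The hypothesis is applied at scale $\ell$; the walk lengths $\le 8\ell$ allowed in condition (ii), together with the monotonicity remark, should accommodate the constant-factor enlargement, or else $\ell=2\lceil\log n\rceil$ is read with that constant. The care needed is in checking that the $(1+\delta)$-exponents from Steps 2 and 3 plus this absorption total at most $4(r_0+2)$, and that the remaining error is $O(\log\log n/\log n)$.
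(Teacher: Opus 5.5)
Your Steps~1--3 follow the paper's proof step for step: the master identity and passage to $\operatorname{Ann}(W)$, the linearization $A_\sigma^{2k}=\sum_j\alpha_{2k,j}\mathsf A_j$, peeling by Lemma~\ref{lem:transfer}(ii), Proposition~\ref{prop:confined} at every $j\le 2k$, then Lemma~\ref{lem:transfer}(i) and Corollary~\ref{cor:certificate}. Your final accounting is more accurate than the paper's. Since $(1+\delta)^{2(r_0+2)j}\le\big((1+\delta)^{2(r_0+2)}\big)^{2k}$, the route yields $\big((1+\delta)^{2(r_0+2)}2\sqrt{d-1}\big)^{2k}$ times a prefactor of at least $n$; the paper's $(1+\delta)^{4(r_0+2)}$ inside the $2k$-th power is a harmless over-count. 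At $k=\lceil\log n\rceil$ one has $n^{1/2k}\to e^{1/2}$. The paper's closing claim $\big(n(Ck)^c\big)^{1/2k}=1+O(\log\log n/\log n)$ is false, so its proof has exactly the hole you found. What this route actually establishes is $\rho(A_\sigma)\le2\sqrt{d-1}\,(1+\delta)^{2(r_0+2)}\,e^{1/2}\big(1+O(\log\log n/\log n)\big)$. That gives the statement only when $(1+\delta)^{2(r_0+2)}\ge e^{1/2}$.

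Your repair does not close the gap. Absorbing $e^{\log n/2k}$ into the spare factor needs $k\ge\log n/\big(4(r_0+2)\log(1+\delta)\big)$, which is far beyond $\log n$ for small $\delta$. A counted walk of length $j\le2k$ can then have a support with up to $(2k+\Lambda)/2$ edges. Condition (i) gives no rank bound for subgraphs with more than $\ell=2\lceil\log n\rceil$ edges. The range $t\le8\ell$ in (ii) only lengthens closed walks \emph{inside} subgraphs that already have at most $\ell$ edges. Monotonicity passes subcriticality to smaller scales, never to larger ones.

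The hypothesis does let you go up to $2k\le2\ell-\Lambda$: supports still have at most $\ell$ edges, and stale-segment closures have length at most $6\ell$. This improves $e^{1/2}$ to $e^{1/4}$, and a sharper count cannot do better at such $k$. For $W=\{0\}$, which the statement allows, Corollary~\ref{cor:certificate} itself gives $R_{\mathcal F}(k)\ge\big(n\,t_{2k}(d)\big)^{1/2k}=e^{\log n/2k}\,2\sqrt{d-1}\,(1-o(1))$. So the certificate reaches the claimed bound for small $\delta$ only at $k\gtrsim\log n/\big((r_0+2)\delta\big)$, where walk counts beyond scale $\ell$ are unavoidable. Your segment decomposition fails for the reason you give yourself.

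Reading $\ell$ ``with that constant'' is a change of hypothesis. If subcriticality is assumed at scale $\gtrsim\log n/\big((r_0+2)\log(1+\delta)\big)$, your argument does prove the stated bound. Under the hypothesis as written, neither your proposal nor the paper's proof reaches it.
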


\begin{proof}
For $n$ large enough that $\Lambda\le\ell$,
Proposition~\ref{prop:confined} applies at every length $j\le2k$ by
monotonicity of the subcriticality notion. Write
$\hat\rho=(1+\delta)^{2(r_0+2)}\sqrt{d-1}$ and let $K$ denote
the prefactor of Proposition~\ref{prop:confined}. Since the recursion
defining $\mathsf A_j$ preserves the signed-walk interpretation, the
annihilator computation gives
$\mathbb E_{\mathrm{Ann}(W)}\tr\mathsf A_j(A_\sigma)
=\sum_{z\in W}\widetilde N_j(z)$. By
Lemma~\ref{lem:transfer}(ii), Proposition~\ref{prop:confined}, and
$\hat\rho^2\ge d-1$ (terms nonincreasing in $p$),
$\sum_{z\in W}\widetilde N_j(z)\le(j+2)\,dK\hat\rho^{\,j}$. Hence with
$k=\lceil\log n\rceil$,
\[
0\le\mathbb E_{\mathcal F}\tr A_\sigma^{2k}
\le\mathbb E_{\mathrm{Ann}(W)}\tr A_\sigma^{2k}
=\sum_j\alpha_{2k,j}\sum_{z\in W}\widetilde N_j(z)
\le (2k+2)dK\sum_j\alpha_{2k,j}\hat\rho^{\,j},
\]
and by Lemma~\ref{lem:transfer}(i) applied after extracting
$(1+\delta)^{2(r_0+2)j}\le(1+\delta)^{4(r_0+2)k}$, the right side is at
most $C'\,n\,(Ck)^{3(r_0+1)^2+2}
\big((1+\delta)^{4(r_0+2)}\,2\sqrt{d-1}\big)^{2k}$. Corollary
\ref{cor:certificate} and $\big(n(Ck)^{c}\big)^{1/2k}
=1+O(\log\log n/\log n)$ conclude.
\end{proof}

Taking $W=\{0\}$ recovers a near-Ramanujan statement for random
signings of dilute graphs in the spirit of \cite{mop20}, from entirely
different machinery; taking $W$ maximal uses the full parity family.
Random $2$-lift towers satisfy the hypotheses with $r_0=O(1)$,
$\delta=o(1)$, $\Lambda=O(1)$ with high probability, so the
$\varepsilon$-version of the Bilu-Linial conjecture holds along them.

\section{Outlook}

The target is now a single inequality: show
$R_{\mathcal F}(k)\le2\sqrt{d-1}+\varepsilon(d,\beta_L)$ at
$k\asymp\log n$, i.e.\ that the single-cycle classes 
$,\sum_{C}N_{2k}(\chi_C)$ dominate the even-wrap excess
$N_{2k}(0)-n\,t_{2k}(d)$ while the pair classes stay controlled. The
structure of Proposition~\ref{prop:master} is that of an
Ihara-Bass/cluster expansion in which the local factors of the
constraint cycles appear with inverted sign; Proposition~\ref{prop:Lident}
makes this exact, and isolates the analytic core of the program in
Question~\ref{q:confined}, with the bicycle-free walk taxonomy of
\cite{mop20} supplying the combinatorics
and now entering with cancellation rather than by union bound. The variance-onset data locate the regime boundary: on dilute
instances the family fluctuates like a uniform signing, and its
measurable advantage is confined to cycle-dense instances, 
precisely where the parity weights of Proposition~\ref{prop:master}
act.


\begin{thebibliography}{9}
\bibitem{ahl02} N.~Alon, S.~Hoory, N.~Linial, \emph{The Moore bound
for irregular graphs}, Graphs Combin.~18 (2002), 53-57.
\bibitem{bass92} H.~Bass, \emph{The Ihara-Selberg zeta function of a
tree lattice}, Internat.~J.~Math.~3 (1992), 717-797.
\bibitem{bilulinial06} Y.~Bilu, N.~Linial,
\emph{Lifts, discrepancy and nearly optimal spectral gap},
Combinatorica 26 (2006), 495-519.
\bibitem{companion} V.~Suvagiya, \emph{Signed circulants at the Ramanujan
bound}, in preparation, 2026.
\bibitem{bordenave20} C.~Bordenave, \emph{A new proof of Friedman's
second eigenvalue theorem and its extension to random lifts},
Ann.~Sci.~\'Ec.~Norm.~Sup\'er.~53 (2020), 1393-1439.
\bibitem{godsilgutman81} C.~D.~Godsil, I.~Gutman,
\emph{On the matching polynomial of a graph}, in: Algebraic Methods in
Graph Theory, 1981.
\bibitem{hastad01} J.~H\aa stad, \emph{Some optimal inapproximability
results}, J.~ACM 48 (2001), 798-859.
\bibitem{heilmannlieb72} O.~J.~Heilmann, E.~H.~Lieb,
\emph{Theory of monomer-dimer systems}, Comm.~Math.~Phys.~25 (1972),
190-232.
\bibitem{huang19} H.~Huang, \emph{Induced subgraphs of hypercubes and a
proof of the Sensitivity Conjecture}, Ann.~of Math.~190 (2019), 949-955.
\bibitem{ks00} M.~Kotani, T.~Sunada, \emph{Zeta functions of finite
graphs}, J.~Math.~Sci.~Univ.~Tokyo 7 (2000), 7-25.
\bibitem{mss15} A.~W.~Marcus, D.~A.~Spielman, N.~Srivastava,
\emph{Interlacing families I: Bipartite Ramanujan graphs of all degrees},
Ann.~of Math.~182 (2015), 307-325.
\bibitem{mop20} S.~Mohanty, R.~O'Donnell, P.~Paredes,
\emph{Explicit near-Ramanujan graphs of every degree}, STOC 2020;
arXiv:1909.06988.
\bibitem{xuzhang26} Z.~Xu, X.~Zhang, \emph{An improved upper bound for
the Bilu-Linial conjecture}, arXiv:2606.28797 (2026).
\end{thebibliography}
\end{document}